\theoremstyle{plain}
\newtheorem{Thm}{Theorem}
\newtheorem{Coro}[Thm]{Corollary}
\newtheorem{Lem}[Thm]{Lemma}
\theoremstyle{definition}
\newtheorem{Def}[Thm]{Definition}
\newcommand{\mS}{\mathcal{S}}
\newcommand{\mC}{\mathcal{C}}
\begin{document}

\title{Heegaard splittings and open books}

\author{Jesse Johnson}
\address{\hskip-\parindent
        Department of Mathematics \\
        Oklahoma State University \\
        Stillwater, OK 74078 \\
        USA}
\email{jjohnson@math.okstate.edu}

\subjclass{Primary 57N10}
\keywords{Heegaard splitting, open book decomposition}

\thanks{This project was supported by NSF Grant DMS-1006369}

\begin{abstract}
We show that if the monodromy of an open book decomposition has sufficiently high displacement distance, acting on the loop and arc complex for a page, then it is the unique minimal Euler characteristic open book for the manifold.  In particular, we show that such an open book induces the unique (up to isotopy) minimal genus Heegaard surface for the manifold, and that this Heegaard surface has cyclic mapping class group.
\end{abstract}

\maketitle

An \textit{open book decomposition} of a compact, connected, closed, orientable 3-manifold $M$ is a pair $(L, \pi)$ where $L \subset M$ is a link and $\pi : (M \setminus L) \rightarrow S^1$ is a surface bundle map such that the closure of each page $\pi^{-1}(t)$ is an embedded surface with boundary $L$. There is a surface $F$ such that the closure of each page $\pi^{-1}(t)$ is homeomorphic to $F$ and the Euler characteristic of $(L, \pi)$ is defined as the Euler characteristic of $F$.  The link complement $M \setminus L$ can be identified with the interior of the quotient $F \times [0,1]$ by a homeomorphism $\phi : F \times \{0\} \rightarrow F \times \{1\}$, that restricts to the identity on $\partial F$. This map is called the \textit{monodromy} of $\pi$.

Choose a collection of points $m \subset \partial F$, with one point in each component of $\partial F$.  The \textit{marked curve complex} $\mC(F) = \mC(F, m)$ is a simplicial complex with each vertex representing a properly embedded, essential simple arc or loop disjoint from $m$, modulo isotopy disjoint from $m$. (In particular, the endpoints of an arc cannot cross a point in $m$.)  Two vertices bound an edge if they have disjoint representatives. If a set of vertices is pairwise connected by edges then the simplex bounded by these vertices is also in $\mC(F)$. The distance $d(u,v)$ between vertices of $\mC(F)$ is the number of edges in the shortest edge path from $u$ to $v$.

Because $\phi$ fixes $\partial F$ pointwise, $m \times [0,1]$ is sent to a collection of points in $L$ with one point in each component. Moreover, $\phi$ determines an isometry of $\mC(F)$, which we will also denote by $\phi :\mC(F) \rightarrow \mC(F)$.  The \textit{displacement} of $\phi$ is $d(\phi) = \min \{d(v, \phi(v))\}$, where the minimum is over all the vertices of $\mC(F)$.  Note that performing $\frac{1}{n}$ Dehn surgery on $L$ corresponds to Dehn twisting $\phi$ along a loop parallel to the boundary of $F$. If we choose a very high surgery coefficient, the displacement in the arc and curve complex will be determined by the displacement in the loop-only curve complex.

The displacement $d(\phi)$ is analogous to the Hempel distance for a Heegaard splitting.  Scharlemann and Tomova~\cite{st:dist} proved that a Heegaard splitting of a 3-manifold $M$ whose distance is greater than twice its genus is, up to isotopy, the unique minimal genus Heegaard splitting for $M$.  Bachman-Schleimer~\cite{bs:versus} introduced the displacement map to prove a similar Theorem relating surface bundles to Heegaard splittings. We will prove an analogous theorem for open book decompositions. (When reading the Theorem, recall that Euler characteristic is, in general, negative.)

\begin{Thm}
\label{mainthm}
Let $(L, \pi)$ be an open book decomposition of a 3-manifold $M$ with Euler characteristic $\chi$ whose monodromy has displacement $d$ on the curve complex for a page in $\pi$.  If $d > 8 - 4\chi$ then every open book decomposition of $M$ has Euler characteristic at most $\chi$ and $(L, \pi)$ is (up to isotopy) the unique open book with Euler characteristic $\chi$.
\end{Thm}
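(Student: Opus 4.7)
The plan is to take any other open book $(L', \pi')$ of $M$, with page $F'$ of Euler characteristic $\chi'$, and to use the intersection of $F'$ with the pages of $\pi$ to exhibit a vertex $c \in \mC(F)$ satisfying $d(c, \phi(c)) \leq 8 - 4\chi'$. Combined with the hypothesis $d(\phi) > 8 - 4\chi$, this forces $\chi' \leq \chi$; and in the equality case $\chi' = \chi$ I will argue that rigidity of the bound forces $(L', \pi')$ to agree with $(L, \pi)$ up to isotopy.

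Concretely, first I would place $L'$ in general position with respect to $\pi$ and pick a page $F'$ of $\pi'$ that is transverse to the pages $P_t$ of $\pi$. The restriction $\pi|_{F' \setminus L}$ is then a circle-valued Morse function whose generic level set $F' \cap P_t$ is a properly embedded 1-manifold in $P_t \cong F$; after discarding inessential components this represents a simplex of $\mC(F)$, and for a regular value $t_0$ I take $c$ to be one of its vertices. As $t$ varies around $S^1$ the level set changes only at the critical levels of $\pi|_{F'}$, and at each such critical level it is altered by a single band or saddle surgery, shifting the corresponding vertex of $\mC(F)$ by a bounded amount. Since returning $P_t$ to itself after one full revolution requires composing with the monodromy, the resulting edge path in $\mC(F)$ runs from $c$ to $\phi(c)$, so its length bounds $d(c, \phi(c))$. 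An Euler characteristic count on $F'$, with appropriate adjustments for $\partial F' = L'$ and for the points of $L \cap F'$, should bound the number of critical levels (times the per-critical-level constant) by $8 - 4\chi'$, producing the required inequality.

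The first half of the theorem is then immediate: if $\chi' > \chi$ then $d(c, \phi(c)) \leq 8 - 4\chi' < 8 - 4\chi < d(\phi)$, contradicting the definition of $d(\phi)$ as a minimum displacement. The hardest step I foresee is the uniqueness claim when $\chi' = \chi$. There, equality in the bound forces every critical level of $\pi|_{F'}$ to contribute maximally to the displacement and each intermediate level set $F' \cap P_t$ to be as simple as possible; chasing through these rigidity constraints should force $L'$ to be isotopic to $L$, a page of $\pi'$ to be isotopic to a page of $\pi$, and the monodromy of $\pi'$ to agree with $\phi$ up to isotopy rel $\partial F$. The main obstacle will be to rule out degenerate configurations that achieve equality in the counting argument without coming from an actual isotopy of open books, and in particular to handle the boundary intersections $L \cap F'$ with the same care as the interior picture in $\mC(F)$.
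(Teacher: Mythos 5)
Your approach (intersecting a page $F'$ of the competing open book directly with the pages of $\pi$ and reading off a path in $\mC(F)$) is not the paper's route, and as sketched it has genuine gaps. First, a structural one: the level sets $F' \cap P_t$ are \emph{not} properly embedded 1--manifolds in the page $P_t$. Their endpoints lie not only on $\partial P_t \subset L$ but also on $\partial F' = L'$, which sits in the interior of $P_t$; arcs ending on $L'$ do not represent vertices of the marked curve complex at all. The paper sidesteps exactly this by never working with a page of the competing open book: it passes to the induced \emph{closed} Heegaard surface (two pages of $\pi'$ glued along $L'$), which is transverse to $L$, and deduces Theorem~1 from Theorem~2 (the stabilization statement forces $4h+4 \geq d$ for any non-induced splitting of genus $h$, hence $\chi' \leq \chi$). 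Second, even for a closed surface your unconditional bound $d(c,\phi(c)) \leq 8-4\chi'$ is false as stated: the path of level curves only exists if \emph{every} level contains a component that is essential in the page, and arranging this is the entire content of the tight/essential flat surface and index machinery (Sections 2--6). Moreover the correct conclusion is a dichotomy -- either a distance bound or the surface is mostly horizontal (Lemmas 13--14); the surface coming from $(L,\pi)$ itself is mostly horizontal and yields no bound, so ``discarding inessential components'' and counting critical levels cannot give your inequality without first ruling out, or separately treating, the horizontal case.

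The uniqueness step is the most serious gap. Your hope that equality $\chi'=\chi$ forces ``rigidity of the counting'' does not match what actually happens: in the equality case the typical configuration is the degenerate one in which the curve-complex path does not exist, and the real work is to show the competing surface can be compressed and isotoped onto two pages of $\pi$ and then that the two open books with the same induced Heegaard surface coincide. The paper does this via the second half of Theorem~2 -- the isotopy subgroup of the mapping class group of the induced splitting consists of book rotations -- whose proof requires the index-two analysis (Lemma~10), the double-path/face-slide machinery (Lemma~3), and the mostly-horizontal analysis (Lemma~14). Your proposal contains no substitute for this, so the equality case remains unproved; to repair the argument you would essentially have to reconstruct the paper's Heegaard-splitting framework rather than work with a single page.
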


By Giroux's correspondence~\cite{giroux}, every open book decomposition for $M$ induces a contact structure on $M$ and every contact structure is induced by a family of open book decompositions. A contact structure will be Stein fillable if and only if the monodromy map of some open book inducing the contact structure can be written as a composition of right handed Dehn twists~\cite{akoz}\cite{giroux}\cite{loipi}. By choosing right handed Dehn twists along a collection of loops that cut the surface into disks, we can construct a pseudo-Anosov map on any surface with Euler characteristic at most $-2$. By taking a suitably high power we can find a map whose displacement is arbitrarily high. Along with Theorem~\ref{mainthm}, this implies that Stein fillable contact structures induced by minimal open books are quite common:

\begin{Coro}
\label{maincoro1}
For every surface $F$ with Euler characteristic $\chi(F) \leq -2$, there is a 3-manifold $M$ with a Stein fillable contact structure induced by a minimal open book decomposition whose pages are homeomorphic to $F$.
\end{Coro}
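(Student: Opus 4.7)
The plan is to construct, for each permissible surface $F$, a single 3-manifold $M$ admitting an open book decomposition whose page is $F$, whose monodromy is a product of right-handed Dehn twists (to force Stein fillability via the Giroux--Loi--Piergallini criterion cited above), and whose displacement is large enough to trigger Theorem~\ref{mainthm} (to force minimality). Everything else in the corollary will follow formally from these two ingredients.

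First, given $F$ with $\chi(F) \leq -2$, I would fix a finite collection of essential simple closed curves on $F$ that fill $F$ and cut it into disks. Following the construction indicated in the paragraph preceding the corollary---a positive-twist variant of Thurston's pseudo-Anosov construction, available precisely because $\chi(F) \leq -2$---I would choose integer multiplicities so that the resulting composition $\tau$ of right-handed Dehn twists along these curves is pseudo-Anosov as a mapping class of $F$ fixing $\partial F$ pointwise.

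Next, since $\tau$ is pseudo-Anosov it acts on the marked curve complex $\mC(F)$ as a loxodromic isometry with strictly positive asymptotic translation length; this follows from the Masur--Minsky hyperbolicity of $\mC(F)$ together with their identification of the loxodromic mapping classes with the pseudo-Anosov ones (the boundary marking is preserved because $\tau$ fixes $\partial F$ pointwise). In particular, $d(\tau^n) \to \infty$ as $n \to \infty$, so I can choose $n$ large enough that $d(\tau^n) > 8 - 4\chi(F)$. Let $M$ be the closed 3-manifold supporting the open book $(L, \pi)$ with page $F$ and monodromy $\tau^n$. Because $\tau^n$ is still a product of right-handed Dehn twists, the Giroux--Loi--Piergallini criterion gives a Stein fillable contact structure on $M$ induced by $(L,\pi)$. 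Theorem~\ref{mainthm}, applied to $(L,\pi)$ with displacement $d(\tau^n) > 8 - 4\chi(F)$, then guarantees that every open book decomposition of $M$ has Euler characteristic at most $\chi(F)$, so $(L,\pi)$ is minimal, completing the construction.

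The main obstacle is the first step, namely producing a pseudo-Anosov representative as a product of \emph{only} right-handed Dehn twists along a filling cut-system. The standard Penner and Thurston constructions naturally mix positive with negative twists, and removing the negative twists is what forces the mild hypothesis $\chi(F)\le -2$ rather than $\chi(F)\le -1$. Once this positive-twist pseudo-Anosov is in hand, the remaining parts of the argument reduce to a one-line application of the hyperbolicity of $\mC(F)$ followed by a direct invocation of Theorem~\ref{mainthm}, so no further subtlety is expected.
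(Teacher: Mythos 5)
Your proposal follows essentially the same route as the paper: build the monodromy as a composition of right-handed Dehn twists along loops cutting $F$ into disks (possible since $\chi(F) \leq -2$) to get a pseudo-Anosov map and hence Stein fillability via Giroux--Loi--Piergallini, then pass to a high power so the displacement exceeds $8 - 4\chi$ and invoke Theorem~\ref{mainthm} for minimality. The only difference is that you spell out the Masur--Minsky justification that pseudo-Anosov maps have positive translation length on $\mC(F)$, which the paper leaves implicit; the delicate existence of a positive-twist pseudo-Anosov is asserted in both your write-up and the paper at the same level of detail.
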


Note that Theorem~\ref{mainthm} bounds the Euler characteristic of alternate open book decompositions, but not their genus. For example, every 3-manifold has an open book decomposition with planar pages. If $M$ has a high genus, high distance open book decomposition then Theorem~\ref{mainthm} implies that every planar open book for $M$ will have a large number of link components.

The proof of Theorem~\ref{mainthm} uses the Heegaard splitting induced by the open book decomposition. A \textit{Heegaard splitting} is a triple $(\Sigma, H^-, H^+)$ where $\Sigma \subset M$ is a compact, closed, separating surface and $H^-, H^+ \subset M$ are handlebodies with disjoint interiors whose union is $M$ and whose common boundary is $\Sigma$.  To prove Theorem~\ref{mainthm}, we consider a Heegaard surface $\Sigma$ that consists of the union of $L$ and two pages of the surface bundle.  Because $F \times I$ is homeomorphic to a handlebody for $I$ an interval and $F$ a surface with boundary, the union of any two pages bounds two handlebodies in $M$ and thus determines a Heegaard splitting.  

The \textit{mapping class group} $Mod(M, \Sigma)$ of a Heegaard splitting is the group of automorphisms of $M$ that take $\Sigma$ onto itself, modulo isotopies of $M$ through homeomorphisms that keep $\Sigma$ on itself. The \textit{isotopy subgroup} consists of elements of $Mod(M, \Sigma)$ that are isotopy trivial on $M$. For an open book $(L, \pi)$, continuous rotation of the circle induce automorphisms of the surface bundle $M \setminus L$.  If $\Sigma$ coincides with the pre-images of two antipodal points in $S^1$ then a rotation by angle $n\pi$ will take $\Sigma$ to itself for every integer $n$ and interchange the two handlebodies for odd $n$.  We will call this a \textit{book rotation} of $\Sigma$.  These elements of the isotopy subgroup of $Mod(M, \Sigma)$ form a cyclic subgroup, which will be infinite order if and only if the monodromy map $\phi$ is infinite order (modulo isotopies of $F$ fixing $m$).  Theorem~\ref{mainthm} is a corollary of the following:

\begin{Thm}
\label{mainthm2}
Let $(L, \pi)$ be an open book decomposition of a 3-manifold $M$ with Euler characteristic $\chi$ whose monodromy has displacement $d$. If $d > 12 - 4\chi$ then the isotopy subgroup of the mapping class group of the Heegaard splitting induced by $(L, \pi)$ will consist of book rotations.
If $(\Sigma, H^-, H^+)$ is any genus $g$ Heegaard splitting with $d > 4g + 4$ then $\Sigma$ is a stabilization of the Heegaard splitting induced by $(L, \pi)$.
\end{Thm}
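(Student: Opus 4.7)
The plan is to reduce both statements to a lower bound on the Hempel distance of the Heegaard splitting $\Sigma_0 = P_0 \cup_L P_1$ induced by $(L, \pi)$, where $P_0, P_1$ are two antipodal pages. Each handlebody of this splitting deformation retracts onto one of the pages $P_i \cong F$, so any compressing disk can be isotoped so that its boundary projects to an essential loop (or union of arcs and loops) on that page. The two disk sets in $\mC(\Sigma_0)$ therefore correspond, via these projections together with the monodromy $\phi$, to vertices in $\mC(F)$. A geodesic in $\mC(\Sigma_0)$ between the disk sets yields a path in $\mC(F)$ of at most twice its length from some vertex $v$ to $\phi(v)$, so the Hempel distance satisfies $d(\Sigma_0) \geq (d - c)/2$ for some small additive constant $c$.

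Given this distance bound, the stabilization statement reduces to a Scharlemann-Tomova style comparison~\cite{st:dist}: a Heegaard splitting whose distance strictly exceeds twice the genus of another Heegaard surface forces the latter to be a stabilization of the former. The hypothesis $d > 4g + 4$, combined with the constant $c$, yields $d(\Sigma_0) > 2g$, so any genus $g$ Heegaard splitting must be a stabilization of $\Sigma_0$.

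For the isotopy subgroup statement, let $f$ be an isotopy-trivial element of $Mod(M, \Sigma_0)$. Since $f$ is isotopic to the identity on $M$, it must preserve the pair of disk sets $\{\mathcal{D}^-, \mathcal{D}^+\}$ setwise (either fixing or swapping them, since a book rotation by odd multiples of $\pi$ swaps the two handlebodies). The stronger hypothesis $d > 12 - 4\chi$ translates into $d(\Sigma_0) > 2g_0 + 4$ where $g_0 = 1 - \chi$ is the genus of $\Sigma_0$, giving extra slack beyond the comparison bound. I would use this slack, together with the arc-complex part of the marked complex $\mC(F, m)$, to argue that $f$ must additionally preserve the decomposition of $\Sigma_0$ into the two pages $P_0, P_1$ and the link $L$. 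Once $f$ preserves this entire book structure while remaining isotopy-trivial on $M$, rigidity of the fibration forces $f$ to act on each page as a power of $\phi$ and on $S^1$ as a rigid rotation, which is exactly a book rotation.

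The main obstacle will be this last step. The Hempel distance bound alone controls the handlebody decomposition of $\Sigma_0$ but not the finer structure of $\Sigma_0$ as two pages glued along $L$. To force $f$ to preserve the page structure, I expect to need the arc component of the displacement on $\mC(F, m)$: arcs in $\Sigma_0$ dual to $L$ should be rigidly pinned down up to the action of book rotations, and any automorphism violating this rigidity would produce a pair of vertices in $\mC(F)$ too close together relative to $d$. Making this rigidity statement precise, while simultaneously handling the case where $f$ swaps the handlebodies and swaps the two pages, is the heart of the technical work.
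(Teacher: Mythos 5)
Your proposal fails at its very first step, and the failure is fatal to both halves. You claim that the Hempel distance of the induced splitting $\Sigma_0 = P_0 \cup_L P_1$ satisfies $d(\Sigma_0) \geq (d-c)/2$, by projecting disk boundaries to the page and comparing with the displacement of $\phi$ in $\mC(F)$. This is false: a Heegaard splitting induced by an open book decomposition \emph{always} has Hempel distance at most two, no matter how complicated the monodromy is (the paper states exactly this in the introduction, precisely to rule out the Scharlemann--Tomova route). Concretely, take disjoint essential arcs $a, b$ in the page $F$; then $a \times I$ and $b \times I$ give compressing disks on the two sides of $\Sigma_0$, and both boundaries are disjoint from a loop on $\Sigma_0$ parallel to a component of the binding $L$, so $d(\Sigma_0) \leq 2$. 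The reason your projection argument does not yield a lower bound is that the subsurface projection of each disk set to each page has diameter zero; the monodromy's displacement only becomes visible when one keeps track of how the two page subsurfaces are linked inside $\Sigma_0$, not through distances in $\mC(\Sigma_0)$ or through projection to a single page. Consequently the hypothesis $d > 4g+4$ gives you no bound $d(\Sigma_0) > 2g$, and the Scharlemann--Tomova comparison cannot be invoked; the same collapse removes the ``extra slack'' you wanted for the isotopy-subgroup statement, whose remaining steps (rigidity of the fibration pinning arcs dual to $L$) are in any case only sketched.

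The paper's actual argument is of a genuinely different nature: it runs thin position with respect to the binding $L$ in the complex of surfaces $\mS(M,L)$. A thin path representing an arbitrary genus $g$ splitting has an index-one maximum, which is isotoped to an essential \emph{flat} surface with respect to the open book fibration (possibly with one flipped square or one-bridge annulus); the combinatorics of its horizontal and vertical pieces then bound the displacement by $4g+4$ unless the surface is mostly horizontal, i.e.\ compresses to two pages, and stabilization follows from Proposition~22 of~\cite{me:stabs}. For the mapping class group statement, an isotopy of $\Sigma_0$ gives a path of index-at-most-two surfaces, each forced by the distance hypothesis $d > 12 - 4\chi$ to be mostly horizontal, and the resulting page structure is rigid enough to conclude the isotopy is a book rotation. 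If you want to salvage your write-up, you would need to replace the Hempel-distance reduction with some mechanism that sees the linking of the two pages, which is exactly what the flat-surface/thin-position machinery provides.
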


Note that a Heegaard splitting induced by an open book decomposition always has distance at most two, so Scharlemann-Tomova's theorem cannot be applied in this situation.  In the Heegaard surface, the high distance occurs on the two subsurfaces defined by the two pages of the open book. This makes the result more along the lines of the subsurface projection distance bound in~\cite{jmm}. However, the subsurface distance on each of the two subsurfaces is zero. It is only when taking into account the fact that the two subsurfaces are linked, as defined in~\cite{ms:holes}, that one can formulate a notion of high distance in the Heegaard surface.

The proof of Theorem~\ref{mainthm2} is based on a combination of thin position techniques and double sweep-out techniques.  These model the arguments in~\cite{st:dist},~\cite{bs:versus} and~\cite{me:stabs}, but with the sweep-out arguments modified and disguised as thin position arguments.  We use Hayashi-Shimokawa thin position~\cite{hs:thin} within the framework of axiomatic thin position~\cite{axiomatic}.

We review thin position and the machinery of axiomatic thin position and the complex of surface in Section~\ref{thinsect}. Flat surfaces, as introduced in~\cite{me:stabs2}, are defined and examined in Sections~\ref{flatsect},~\ref{esssect} and~\ref{diskbandsect}. We generalize the machinery for finding essential flat surfaces from~\cite{me:stabs2} to the present setting in Sections~\ref{indxonesect} and~\ref{indxtwosect}, then use these surfaces to find the distance bound in Sections~\ref{distsect} and~\ref{mosltlyhsect}. In the final Section, we bring these tools together to prove Theorems~\ref{mainthm} and~\ref{mainthm2}.

I thank Ken Baker for pointing out the connection between this work and contact structures.

\section{Thin position}
\label{thinsect}

We first review the definition of thin position for a surface with respect to a link $L \subset M$, in terms of the complex of surfaces.

A two-sided (possibly disconnected) surface $S \subset M$ is \textit{strongly separating} if the components of $M \setminus S$ can be labeled $+$ and $-$ so that each component of $S$ is the frontier of both a positive component and a negative component. A choice of such labels will be called a \textit{transverse orientation}.

We will consider isotopy classes of surfaces transverse to $L$, i.e. given two surfaces transverse to $L$, we will consider them isotopic if there is an isotopy in which each intermediate surface is transverse to $L$.  Moreover, two surfaces will be called \textit{sphere blind isotopic} if they are related by a sequence of isotopies (transverse to $L$) and the following moves: We will allow ourselves to add or remove sphere components that bound balls in $M$ disjoint from $L$ and the rest of the surface.  We also allow ourselves to attach a tube from any sphere component disjoint from $L$ to any other component, or to pinch off a sphere component (i.e. the inverse of adding a tube).

A \textit{compressing disk} for a surface $S \subset M$ transverse to $L$ is an embedded disk $D \subset M$ whose interior is disjoint from $S$ and $L$ and whose boundary is an essential loop in $S \setminus L$. A \textit{bridge disk} is a disk $D \subset M$ with interior disjoint from $S$ and $L$ whose boundary consists of an arc in $L$ and an (essential) arc with interior in $S \setminus L$. A \textit{K-disk} is either a compressing disk or a bridge disk (The letter `K' stands for knot.) and such a disk defines either a compression or bridge compression, respectively.

Following~\cite{axiomatic}, we define the \textit{complex of surfaces} $\mS(M, L)$ as the cell complex whose vertices are sphere blind isotopy classes of transversely oriented, strongly separating surfaces transverse to $L$. Edges connect each surface to the surfaces that result from compressing or bridge compressing it.  For each pair of disjoint compressions, there is a loop of up to four edges that correspond to compressing along the two disks in either order. We include a 2-cell in $\mS(M, L)$ bounded by this loop for each such pair and higher dimensional cells correspond to larger collections of pairwise disjoint compressions.  

Think of the vertices of $\mS(M, L)$ arranged by negative Euler characteristic plus number of components (ignoring sphere components). Compressions and boundary compressions reduce the negative Euler characteristic, so we can think of the corresponding edges as pointing down from the vertex. The \textit{descending link} $L_v$ of a vertex $v$ is the simplicialization of the subcomplex of its link spanned by the edges below the vertex. That is, we identify any two cells in this subcomplex with the same boundary. 

The result is a flag simplicial complex whose vertices correspond to compressing disk and bridge compressing disks for the surface, with edges connecting disjoint disks. In other contexts, this complex is called the \textit{disk complex} for the surface $S$ representing $v$. The \textit{index} of $v$ is defined as zero when the descending link $L_v$ is empty, and otherwise is equal to $i + 1$ where $\pi_i(L_v)$ is the first non-trivial homotopy group of the descending link. This definition of index was introduced by Dave Bachman~\cite{bach:index} (using the disk complex for the surface, but without the complex of surfaces) based on ideas first suggested by Hyam Rubinstein.

A bridge compression reduces the number of points of intersection between a surface and $L$, so if we isotope a surface $S$ (by an isotopy that is not necessarily transverse to $L$) to minimize its intersection number with $L$, then there will be no bridge compressions for the resulting surface.  Moreover, if $S$ is incompressible then there will be no compressions in the complement of $L$, so the vertex $v$ representing the resulting surface will have index zero (i.e. its descending link will be empty).  Thus we have the following Lemma:

\begin{Lem}
If $S$ is an incompressible surface for $M$ then $S$ is isotopic to an index-zero surface with respect to $L$.
\end{Lem}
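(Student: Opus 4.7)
The plan is to realize the desired position by picking a representative of the isotopy class of $S$ that minimizes the geometric intersection number with $L$, and then verifying that such a representative already has empty descending link in $\mS(M,L)$.

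First I would fix a surface $S'$ isotopic to $S$ in $M$ and transverse to $L$, chosen so that $|S' \cap L|$ is minimal among all such representatives. A minimizer exists because the intersection number is a nonnegative integer; note that the isotopies used to reach $S'$ are allowed to be non-transverse to $L$ at intermediate times, and only the endpoints are required to be transverse, which matches the convention set up in the section. Let $v \in \mS(M,L)$ denote the vertex represented by $S'$; the goal is to show the descending link $L_v$ is empty, which by definition means $v$ has index zero.

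Next I would rule out the two possible kinds of $K$-disks in turn. For bridge disks: given a hypothetical bridge disk $D$ with $\partial D = \alpha \cup \beta$, where $\alpha \subset L$ and $\beta \subset S' \setminus L$, pushing $S'$ across $D$ onto the opposite side of $L$ yields a surface $S''$ isotopic to $S'$ in $M$ with $|S'' \cap L| = |S' \cap L| - 2$, contradicting the choice of $S'$. For compressing disks: any compressing disk for $S'$ with interior disjoint from $L$ is in particular a compressing disk for $S'$ in $M$, contradicting the assumption that $S$ (and hence $S'$) is incompressible in $M$.

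Since neither compressing nor bridge-compressing disks exist for $S'$, the descending link $L_v$ has no vertices and is therefore empty, so $v$ has index zero by the definition introduced just above the lemma. The only point that requires any care is the bridge-compression step, where one must check that the modification really is an ambient isotopy in $M$ (rather than a topological change of the underlying surface) so that the resulting surface remains in the isotopy class of $S$; this is immediate because a bridge disk describes exactly an isotopy pushing an arc of $S'$ across $L$.
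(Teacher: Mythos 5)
Your proof is correct and follows the same route as the paper: minimize $|S \cap L|$ over the isotopy class, note that a bridge disk would give an isotopy reducing the intersection number (contradicting minimality), and that a compressing disk would contradict incompressibility of $S$ in $M$, so the descending link is empty. You simply spell out in slightly more detail the point the paper states informally, namely that a bridge compression is realized by an ambient isotopy across the bridge disk.
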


Any Heegaard surface for $M$ can be isotoped to be disjoint from $L$, and moreover it can be isotoped so that $L$ is contained on either side, and $S$ is completely compressible on the side not containing $L$.  These isotopies and compressions determine an oriented path in $\mS(M, L)$ in which the Heegaard surface appears as a local maximum.  As in~\cite{axiomatic}, this path can be weakly reduced to a path with index-one maxima and index-zero minima. (This is one of the basic results of thin position, first shown by Gabai~\cite{gabai}, but we phrase it in terms of axiomatic thin position here for consistency.)

Such a path may not contain an interior local minimum, but it will always contain a local maximum, and this will correspond to a surface with genus at most that of $\Sigma$.  Thus we have the following:

\begin{Lem}
If $\Sigma$ is a genus $g$ Heegaard surface for $M$ then there is an index-one surface $S \subset M$ with respect to $L$ with genus at most $g$ that intersects $L$ non-trivially.  
\end{Lem}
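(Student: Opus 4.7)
The plan is to apply the weak-reduction machinery of axiomatic thin position to the oriented path in $\mS(M,L)$ constructed in the paragraph immediately preceding the lemma, and then extract an appropriate local maximum.

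The construction of the preceding paragraph produces an oriented path in $\mS(M,L)$ along which $\Sigma$ appears as an interior local maximum of genus $g$. Applying the weak-reduction procedure of~\cite{axiomatic} to this path, I obtain another path between the same endpoints whose interior local maxima all have index one, whose interior local minima all have index zero, and no vertex of which exceeds $\Sigma$ in genus. The vertices of this new path alternate between maxima and minima, with each connecting edge a compression or bridge compression whose upper endpoint is the max. Since the endpoints are fixed and the path is not constant, at least one interior local maximum exists.

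To extract such a max that actually meets $L$, I would observe that by construction the two endpoints of the path correspond to trivial configurations with $L$ on opposite sides of the surface: one came from pushing $L$ onto the negative side of $\Sigma$ and compressing on the positive side, and the other from the opposite choice. A full compression disjoint from $L$ does not change which side of the surface contains $L$, and neither do sphere-blind isotopies, whereas a bridge compression does. Hence the weakly-reduced path must contain at least one bridge-compression edge. Setting $S$ to be the local maximum at the upper end of such an edge then gives $|S \cap L| \geq 2$, since the bridge compression reduces $|\cdot \cap L|$ by two, while $S$ has index one and genus at most $g$ by the reduction.

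The hard part will be making rigorous the $\mathbb{Z}/2$-valued invariant ``which side of the surface contains $L$'' for the sphere-blind-isotopy classes of $\mS(M,L)$, so as to establish the unavoidability of a bridge compression in any path joining the two endpoints. Concretely, one needs to check that this side assignment is well-defined up to the sphere-blind moves (adding, removing, tubing, or untubing sphere components disjoint from $L$) and that full compressions preserve it, whereas bridge compressions flip it. Once this parity argument is in place, the rest follows directly from the thin-position machinery already assembled in the excerpt.
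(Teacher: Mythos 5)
Your proposal follows the paper's own route: it takes the oriented path in $\mS(M,L)$ constructed in the paragraph before the lemma, weakly reduces it as in~\cite{axiomatic} to a path with index-one maxima and index-zero minima of genus at most $g$, and the side-of-$L$ parity argument you outline is exactly the (implicit) reason the paper can assert that some maximum meets $L$, since the two endpoints have $L$ on opposite sides and only bridge compressions move $L$ across the surface. The one adjustment needed is that the upper endpoint of a bridge-compression edge need not itself be a local maximum; but because compressions and bridge compressions never increase $|S \cap L|$ as one descends, the local maximum at the top of the monotone descending run containing that edge still intersects $L$ non-trivially and has index one and genus at most $g$, so your argument goes through.
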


If we can change one path in $\mS(M, L)$ to another by replacing edges in one path by new edges such that new and old edges bound a face then we say that the two paths are related by a \textit{face slide}. Any isotopy from a Heegaard surface $\Sigma$ to itself can be extended to an ambient isotopy of $M$ and determines a sequence of paths in $\mS(M, L)$, related by face slides.  By~\cite{axiomatic}, this sequence of paths can also be thinned, to a sequence consisting of a number of steps in which we replace two or more consecutive maxima with a single index-two maximum, then replace the index-two maximum with a new sequence of index-one maxima. 

Each path in this sequence has at least one maximum of genus $g$ and by following these maxima, we can find a path with index-two maxima and index-one and -zero maxima that corresponds to the isotopy. This is summed up as follows:

\begin{Lem}
\label{indextwopathlem}
If $\Sigma$ is a strongly irreducible genus $g$ Heegaard surface that is index-one with respect to $L$ then every isotopy of $\Sigma$ is defined by a (possibly constant) path in $\mS(M, L)$ with index-two maxima and index-zero and -one minima, all all isotopic in $M$ to $\Sigma$.
\end{Lem}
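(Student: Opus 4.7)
The plan is to convert the given isotopy of $\Sigma$ into a closed path in $\mS(M,L)$ based at the vertex $[\Sigma]$, and then apply the thinning machinery of~\cite{axiomatic}. First I would extend the isotopy of $\Sigma$ to an ambient isotopy $\Phi_t$ of $M$ and, after a generic perturbation, arrange that $\Phi_t(\Sigma)$ is transverse to $L$ except at finitely many times, at each of which a single compression or bridge compression occurs. The resulting combinatorial data defines a closed path in $\mS(M,L)$ based at $[\Sigma]$.

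Two generic perturbations of the same ambient isotopy are joined by a 2-parameter family whose codimension-one singularities correspond to the 2-cells of $\mS(M,L)$; consequently, the two resulting paths are related by a finite sequence of face slides, so the isotopy is well-defined up to face slides. I would then invoke the thinning procedure of~\cite{axiomatic}, which transforms any sequence of paths related by face slides into a canonical form whose local maxima have index two and whose local minima have index zero or one.

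Because $\Sigma$ itself appears as a maximum of genus $g$ in the starting path and cannot be destroyed by any face slide, every intermediate path in the thinning sequence contains at least one maximum of genus $g$. Following these genus-$g$ maxima through the procedure yields a single path in $\mS(M,L)$ with index-two maxima and index-zero and -one minima, as required. The ``possibly constant'' clause accounts for the case in which the isotopy can be realized transverse to $L$ throughout, so the extracted path is a single vertex.

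The main obstacle will be verifying that every maximum along the extracted path represents a surface isotopic in $M$ to $\Sigma$. Here the strong irreducibility of $\Sigma$ is essential: its disk complex contains no pair of disjoint compressing disks on opposite sides, so any index-two configuration arising at the level of $\Sigma$ must consist of two disks on the same side. In that case the two neighboring index-one maxima represent surfaces that are ambient isotopic to $\Sigma$, and the common index-two surface lies at the same genus and is likewise isotopic to $\Sigma$. This is what justifies the final assertion of the lemma.
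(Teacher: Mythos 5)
There is a genuine gap: you apply the thinning machinery to the wrong object, and as a result the index-two bound on the maxima is never actually established. Tracing $\Phi_t(\Sigma)$ through the ambient isotopy does give a closed edge path in $\mS(M,L)$, but its edges come only from tangencies of the moving surface with $L$, i.e.\ bridge-compression edges traversed up and down (no genuine compression can occur during an isotopy, since every vertex along the trace is isotopic to $\Sigma$ in $M$, so the phrase ``a single compression or bridge compression occurs'' is already off). More importantly, this single path carries no index control: nothing forces the descending links of its maxima to have non-trivial $\pi_1$, and thinning a \emph{single} path in the axiomatic framework only produces index-one maxima and index-zero minima. Index-two vertices arise only when one thins a genuine two-parameter family, namely a sequence of paths related by face slides. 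In the intended argument that family is the one obtained by letting the ambient isotopy act on the thin path representing the splitting $(\Sigma,H^-,H^+)$ itself --- the path whose genus-$g$, index-one maximum is $\Sigma$ (this is where the hypotheses that $\Sigma$ is strongly irreducible and index one with respect to $L$ enter). Thinning that sequence replaces consecutive index-one maxima by a single index-two maximum and back again, and the path asserted by the lemma is extracted \emph{transversally} to the sequence, by following the genus-$g$ maxima from each path to the next; these maxima are isotopic copies of $\Sigma$ joined by bridge compressions. Your ``sequence of paths related by face slides'' is instead a comparison between two generic perturbations of the same isotopy --- a well-definedness statement --- and thinning it does not yield the desired path.

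Your final paragraph also misuses strong irreducibility. The relevant disk complex here is the K-disk complex with respect to $L$ (compressing disks and bridge disks in the complement of $L$), not the disk complex of the Heegaard splitting, so the absence of disjoint compressing disks on opposite sides of $\Sigma$ does not directly constrain the index-two configurations that appear. Moreover, ``two disks on the same side'' can never account for an index-two vertex: the disks on one side of a surface span a contractible complex (this is exactly the McCullough fact the paper invokes in the proof of Lemma~\ref{distboundclosedesslem}), so an essential loop in the descending link must use both sides. The correct role of the hypotheses is to place $\Sigma$ as the genus-$g$, index-one maximum of the initial thin path, so that following genus-$g$ maxima through the thinned two-parameter family is possible and automatically yields surfaces isotopic in $M$ to $\Sigma$.
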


Each of these Lemmas implies the existence of a surface with non-trivial index with respect to $L$.  To prove Theorems~\ref{mainthm} and~\ref{mainthm2}, we will show that the existence of such a surface implies a bound on the displacement of the monodromy map of $\pi$.

\section{Flat surfaces}
\label{flatsect}

Flat surfaces were introduced in~\cite{me:stabs2} in order to get more control over double sweep-outs of a 3-manifold. Here, we adapt the idea to comparing sweep-outs and open book decompositions.

Given an open book $(L, \pi)$ as above, let $N \subset M$ be the complement of an open regular neighborhood of $L$.  Then $N$ is a compact 3-manifold whose boundary consists of one or more tori.  The restriction of $\pi$ to $N$ is a surface bundle map, which we will also denote by $\pi : N \rightarrow S^1$.  

A \textit{page} of $\pi$ is a level surface $F_t = \pi^{-1}(t)$ for $t \in S^1$.  A \textit{vertical annulus} $A \subset N$ is an annulus whose interior is transverse to the pages of $\pi$ and whose boundary consists of two loops in pages of $\pi$.  A \textit{vertical band} $B \subset N$ is a disk that intersects the pages of $\pi$ in parallel arcs, including two arcs in the boundary of $B$. The remaining two arcs of $\partial B$ are contained in $\partial N$ and transverse to the pages of $\pi$.  A \textit{horizontal subsurface} in $N$ is a subsurface of a page $F_t$.

The intersection of a surface $\Sigma \subset M$ with $N$ is a properly embedded surface in $S \subset N$.  We will say that a compact, orientable, properly embedded surface $S \subset N$ is \textit{flat} if $\partial S$ is essential in $\partial N$ and $S$ is the union of a collection of vertical annuli, vertical bands and horizontal subsurfaces such that any two vertical annuli or bands in $S$ are disjoint.  In particular, the boundaries of the vertical annuli must be boundary loops in horizontal subsurfaces.  The boundaries of the vertical bands must be contained in the union of $\partial N$ and the horizontal subsurfaces.  

\begin{Lem}
Every piecewise linear surface properly embedded in $N$ is isotopic to a flat surface.
\end{Lem}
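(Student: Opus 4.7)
My plan is to put $\pi|_S$ in Morse position and then recognize the resulting handle decomposition of $S$ as the flat structure.  Using PL general position, I first isotope $S$ so that $\pi|_S$ is a Morse function with pairwise distinct critical values $t_1 < \cdots < t_k$ on $S^1$, and so that each component of $\partial S$ on the tori of $\partial N$ is in normal form with respect to the foliation by loops $F_t \cap \partial N$; that is, each arc of $\partial S$ is either a single horizontal loop or an alternating concatenation of horizontal arcs (along the foliation) and arcs transverse to it.  This boundary normalization is a standard move for curves on a foliated torus.

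Next, I choose regular values $s_j \in (t_j, t_{j+1})$ (cyclic indices) and a small $\epsilon > 0$.  On each arc $[t_j + \epsilon, t_{j+1} - \epsilon]$ strictly between consecutive critical values, $S \cap \pi^{-1}([t_j + \epsilon, t_{j+1} - \epsilon])$ is a $\pi$-product $C_j \times [t_j + \epsilon, t_{j+1} - \epsilon]$, where $C_j = S \cap F_{s_j}$ is a compact $1$-manifold.  A fiber-preserving isotopy of $N$ along the $S^1$ direction straightens this product so that each loop component of $C_j$ sweeps out a vertical annulus and each arc component sweeps out a vertical band, the arc endpoints traveling along transverse arcs on $\partial N$ furnished by Step~1.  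Vertical pieces over distinct intervals are automatically disjoint since they lie over disjoint arcs of $S^1$, and within a single interval they are disjoint because the components of $C_j$ are.

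It remains to flatten the short intervals $[t_j - \epsilon, t_j + \epsilon]$ around each critical value.  The preimage $S \cap \pi^{-1}([t_j - \epsilon, t_j + \epsilon])$ is a small disk neighborhood of the critical point in $S$: a disk at a max or min, or a disk containing a saddle singularity.  In every case this region is simply connected, so I can isotope it into the single page $F_{t_j}$ while fixing its boundary, realizing it as a horizontal disk (for a max or min) or as a horizontal disk containing a $4$-valent vertex (for a saddle).  Together with the vertical annuli and bands from the previous step, these horizontal pieces form the desired flat decomposition.

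The main obstacle will be the saddle case: one must check that pushing the saddle neighborhood into $F_{t_j}$ can be done so that it embeds disjointly from the attached vertical pieces and from the other horizontal subsurfaces.  This works because the pair of arcs of $S \cap F_{t_j - \epsilon}$ and the pair of arcs of $S \cap F_{t_j + \epsilon}$ near the saddle are related by a single band move in $F_{t_j}$, and a regular neighborhood of that band move embeds in the page; the vertical pieces attached to the neighborhood match this embedding along its boundary.  A minor additional care point is that the critical values must really be distinct on the common page $F_{t_j}$, which is ensured by the perturbation in Step~1.  Once these checks are in place, all the pieces glue consistently and $S$ is in flat form.
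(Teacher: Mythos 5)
Your argument is correct and follows essentially the same route as the paper: isotope $S$ so the level sets of $\pi|_S$ are curves, arcs, and singular (saddle/graph) levels, flatten regular neighborhoods of the singular levels and extrema into horizontal subsurfaces of pages, and observe that the remaining product regions are foliated by loops and arcs, hence consist of disjoint vertical annuli and bands. Your treatment is merely more explicit (Morse position, boundary normalization on $\partial N$, the saddle check) than the paper's brief version of the same idea.
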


\begin{proof}
If $S$ is a piecewise linear surface then (after isotoping $S$ slightly if necessary), the level sets of $\pi|_S$ will consist of simple closed curves, simple arcs and graphs in $S$.  We can isotope $S$ so as to make a regular neighborhood of each graph horizontal.  The complement of these horizontal subsurfaces will be foliated by simple closed curves and properly embedded arcs, and thus consist of pairwise disjoint vertical annuli and vertical bands.
\end{proof}

Orienting $S^1$ induces a collection of preferred normal vectors on each page $F_t$.  If $S$ is strongly separating and transversely oriented in $N$ then every horizontal subsurface $F \times \{a\}$ has a positive component of $N \setminus S$ on one side and a negative component on the other.  We will say that this subsurface \textit{faces up} if the preferred normal vectors point towards the positive component and \textit{faces down} if they point towards the negative component.  (Every subsurface will face either up or down.)

\begin{Def}
We will say that a flat surface $S$ is \textit{tight} if the following conditions hold:
\begin{itemize}
\item No horizontal annulus disjoint from $\partial N$ or disk with two arcs in $\partial N$ has one adjacent vertical annulus/band above it and the other below. 
\item For every horizontal disk subsurface with two or fewer arcs in the boundary, the closest adjacent horizontal subsurface faces the opposite way from the disk.
\item For every horizontal disk subsurface with three arcs in the boundary, the closest adjacent horizontal subsurface on the side with at least two vertical bands faces the opposite way from the disk.
\item For every horizontal annulus component disjoint from $\partial N$ with both adjacent vertical annuli above it (or below it), the closer adjacent horizontal subsurface faces the opposite way from the annulus.
\end{itemize}
\end{Def}

Note that this is a generalization of the definition of tight surfaces in a sweep-out in~\cite{me:stabs2}, with additional conditions because $F$ has non-empty boundary.

\begin{Lem}
\label{tightsurfacelem}
Every flat surface $S$ is isotopic to a tight surface $S'$ such that every horizontal subsurface of $S$ is sent into a horizontal subsurface of $S'$ or into a vertical band or annulus.
\end{Lem}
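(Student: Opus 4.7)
The plan is to introduce a complexity $c(S)$ on flat surfaces — for concreteness, the lexicographic pair (number of vertical bands and annuli, number of horizontal subsurfaces) — and to proceed by induction on $c(S)$. If $S$ already satisfies the four tightness conditions there is nothing to do. Otherwise I would pick any violation and produce a local isotopy, supported in a small product region in $N$, that strictly decreases $c(S)$ while preserving the flat structure and the property that every horizontal subsurface of the original $S$ maps into a horizontal subsurface, vertical band, or vertical annulus of the new surface.

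For the first tightness condition, a horizontal annulus or bigon disk $H$ with vertical pieces $V_{\uparrow}$ above it and $V_{\downarrow}$ below it is confined to a thin vertical slab of $N$ bounded by two pages; pushing $H$ vertically through $V_{\uparrow}$ (say) amalgamates $V_{\downarrow}\cup H\cup V_{\uparrow}$ into a single vertical annulus or band and erases one horizontal component. For the second, third, and fourth conditions, a small horizontal disk or annulus whose closest adjacent horizontal subsurface faces the \emph{same} way is co-parallel to that neighbor across the intervening vertical band or annulus; pushing the small piece across that vertical piece into the neighbor merges the two horizontal subsurfaces into one and erases both the small horizontal component and the connecting vertical piece. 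In every case $c(S)$ strictly decreases. The tracking condition holds because each of these moves is a push along a product region in $N$: horizontal pieces that get absorbed are literally swept across an adjacent vertical annulus or band into a neighboring horizontal piece, while all other horizontal pieces are only translated vertically.

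The main obstacle will be to verify that violations can be resolved in an order that does not manufacture new violations raising $c(S)$ later. I would handle this by selecting innermost violations first, so that the product region used for the push is disjoint from the rest of $S$ and the move is genuinely local, and by a short case check that the facing-up/facing-down label inherited by each amalgamated horizontal or vertical piece is determined by the transverse orientation of its predecessors and therefore does not spuriously create same-facing neighbors elsewhere. Since $c(S)$ is a non-negative integer invariant, the induction terminates, and composing the individual tracking properties yields a tight surface $S'$ satisfying the required mapping condition.
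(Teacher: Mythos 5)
Your overall strategy---remove each tightness violation by a local isotopy and induct on a complexity---is the same as the paper's, and your treatment of the first condition (shrinking the horizontal annulus or bigon so that the two vertical pieces amalgamate into one) is exactly the paper's move. The gap is in your treatment of the facing conditions. You claim that a disk or annulus $E$ whose closest adjacent horizontal subsurface $E'$ faces the \emph{same} way is ``co-parallel to that neighbor across the intervening vertical band or annulus'' and that one pushes $E$ across this parallelism onto $E'$. This has the geometry backwards. It is the \emph{opposite}-facing configuration (the one tightness permits) in which $E$ sits parallel to a piece of $E'$ across a product region; if $E$ and $E'$ face the same way and their projections to $F$ overlapped over some region, then (since $E'$ is the closest horizontal piece) the product region over that overlap between the two levels would be a single complementary component of $S$, lying just above $E$ (hence positive) and just below $E'$ (hence negative), a contradiction. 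So same-facing forces the projections of $E$ and $E'$ into $F$ to be disjoint up to isotopy. That disjointness is the key observation in the paper's proof, and it is what makes the move possible: one slides $E$ vertically into the page containing $E'$, where it lands disjointly from $E'$, gluing to it along the collapsed loop or arc only when the connecting vertical piece terminates at that level. Your proposal never establishes this, and a literal ``push across the vertical piece onto the neighbor'' would not even be an isotopy of an embedded surface if the two pieces genuinely were parallel.

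This also affects your termination argument. The slide just described need not erase any vertical annulus or band (the vertical pieces attached to $\partial E$ may merely shorten, for instance when the nearest horizontal subsurface in the sliding direction is not the one attached at the far end of $E$'s vertical piece), and if horizontal subsurfaces are counted as components their number need not drop either; so the lexicographic pair (number of vertical pieces, number of horizontal subsurfaces) does not visibly decrease under these moves. What does decrease is, for example, the number of pages of $\pi$ containing horizontal subsurfaces of $S$: the level of $E$ empties into the level of $E'$, and one iterates until $E$ either merges with the subsurface at the end of its vertical annulus or band or meets an opposite-facing neighbor. With the disjoint-projection observation supplied and the complexity adjusted along these lines, your induction scheme does recover the paper's argument, and the tracking statement follows as you describe.
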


\begin{proof}
Let $S$ be a flat surface and let $E \subset S$ be a horizontal subsurface. In each of the cases described above, we will define an isotopy that eliminates the bad subsurface.

If $E$ is a horizontal annulus whose adjacent vertical annuli go in opposite directions, we can shrink $E$ to a single loop, turning the two vertical annuli into a single vertical annulus. If $E$ is a disk with two arcs in $\partial N$ and the adjacent bands going in opposite directions then we can employ a similar construction, shrinking the horizontal disk to an arc and combining the two vertical bands.

Otherwise, assume $E$ is a disk with zero or one arcs in $\partial N$, a disk with two arcs $\partial N$ and adjacent bands going the same direction or an annulus with both adjacent vertical annuli going in the same direction. If the horizontal subsurface $E'$ adjacent to $E$ faces the same way as $E$ then the projections of the two horizontal subsurfaces into $F$ must be disjoint (up to isotopy) and we can isotope $E$ into the same level as $E'$. The same is true if $E$ is a disk with three arcs in $\partial D$ and the closest adjacent subsurface on the same side as at least two of the vertical bands faces the same way as $E$.
\end{proof}

\section{Essential surfaces}
\label{esssect}

A flat surface $S$ will be called \textit{essential} if for every horizontal subsurface $E = F_t \cap S$, the complement $\partial E \setminus \partial F_t$ is a collection of essential loops and essential, properly embedded arcs in $F_t$.

\begin{Lem}
\label{indexzerolem}
Assume $S$ is the intersection of $N$ with an index-zero (with respect to $L$) surface in $M$ such that $S \setminus N$ is a collection of essential disks in $M \setminus N$.  If $S$ is tight then $S$ is essential.  
\end{Lem}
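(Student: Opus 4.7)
The plan is to argue by contradiction: assume some horizontal subsurface $E \subset S \cap F_t$ has a component of $\partial E \setminus \partial F_t$ (a loop $\gamma$ or an arc $\alpha$) that is inessential in $F_t$. The idea is to manufacture either a compressing or a bridge disk for the full surface $\Sigma = S \cup (\Sigma \setminus N)$ in $M$, contradicting the index-zero hypothesis, and to use tightness of $S$ to eliminate the remaining cases.

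\emph{Loop case.} Among all inessential loops in $S \cap \mathrm{int}(F_t)$ (both boundary loops of horizontal subsurfaces in $F_t$ and transverse crossings of $F_t$ by vertical annuli), select $\gamma$ bounding an innermost disk $D \subset F_t$. A routine innermost analysis, together with the fact that $F_t$ has no closed components, shows $\mathrm{int}(D) \cap S = \emptyset$; since $D \subset F_t \subset N$ we also have $D \cap L = \emptyset$. Thus $D$ is an embedded disk in $M$ with $\partial D = \gamma \subset \Sigma$ and interior disjoint from $\Sigma$ and $L$. If $\gamma$ is essential in $\Sigma \setminus L$, then $D$ is a compressing disk for $\Sigma$, directly contradicting the index-zero hypothesis. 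Otherwise $\gamma$ bounds a disk $D^{\flat} \subset \Sigma \setminus L$; I then extend $D$ across the adjacent vertical annulus $A$ (taking a half of $A$ when $\gamma$ is a transverse crossing) to a loop $\gamma'$ on another horizontal subsurface $E'$, obtaining an embedded disk $D \cup A' \subset N$ with interior disjoint from $\Sigma$ and $L$. If $\gamma'$ is essential in $\Sigma \setminus L$, the resulting disk is again a compressing disk and we are done.

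\emph{Arc case.} An inessential arc $\alpha$ cobounds an innermost disk $D \subset F_t$ with an arc $\beta \subset \partial F_t$. Pushing $\beta$ across an adjacent meridian disk of $\Sigma \cap (M \setminus N)$ realises $D$ as a candidate bridge disk whose boundary consists of $\alpha$ and an arc in $L$. Either $\alpha$ is essential in $\Sigma \setminus L$ and $D$ witnesses a bridge compression (contradicting index-zero), or $\alpha$ is inessential and one chases across the adjacent vertical band $B$ to the next horizontal subsurface, iterating exactly as in the loop case.

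\emph{Tightness.} If every curve encountered along the chase is inessential in $\Sigma \setminus L$, I intend to derive a contradiction from tightness. The hypothesis that $\gamma$ bounds $D^{\flat} \subset \Sigma$ disjoint from $L$ means that one side of $\gamma$ in $\Sigma$ is a disk, and decomposing $D^{\flat}$ according to the flat structure forces some horizontal piece in the chain to be either a disk with at most three $\partial N$-arcs in its boundary or an annulus disjoint from $\partial N$. At such a terminal piece the facing data inherited from $D^{\flat}$ requires the adjacent horizontal subsurface to face the \emph{same} way as the terminal piece, directly contradicting whichever of the four bullets in the definition of tight applies (the bullet is selected by the number of $\partial N$-arcs and whether the piece is a disk or an annulus). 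This tightness case analysis is the main obstacle: the earlier steps are standard innermost-disk manipulations, but verifying that the four bullets between them cover every flat configuration arising from an inessential-in-$\Sigma \setminus L$ candidate curve is the technical heart of the lemma.
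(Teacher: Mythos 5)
Your overall skeleton matches the paper's: assume a horizontal boundary loop or arc is trivial in the page, and try to convert it into a K-disk that contradicts the index-zero (empty descending link) hypothesis. But there is a genuine gap at exactly the step you flag as ``the main obstacle.'' The whole argument hinges on ruling out the case in which every trivial-in-$F_t$ curve you encounter is also inessential in $S$, and for that you only state an intention: ``I intend to derive a contradiction from tightness.'' The sketch you give does not yet work. Tightness constrains the facing of the \emph{closest adjacent horizontal subsurface of $S$} relative to certain small horizontal pieces of $S$; it is not clear, and you do not show, why decomposing the disk $D^{\flat}\subset\Sigma\setminus L$ along the flat structure produces a terminal piece whose closest adjacent subsurface is forced to face the \emph{same} way, nor that your four-bullet case division covers all configurations (e.g.\ a terminal annulus falling under the first bullet, where the relevant condition is about the directions of the adjacent vertical annuli rather than facing, or pieces of $D^{\flat}$ that exit $N$ through the meridian disks). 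The paper does not reprove this either: it quotes Lemma~19 of~\cite{me:stabs2} (tightness implies that a trivial horizontal loop can be replaced by a loop that is trivial in some $F_t$ but \emph{essential in $S$}) and asserts that the lemma generalizes to pages with boundary, so that even a trivial horizontal \emph{arc} yields an essential trivial \emph{loop}; an innermost such loop then bounds a compressing disk, contradicting index zero. Without that lemma (or a complete substitute argument), your proof does not close.

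Two smaller points in the parts you do carry out. First, the ``chase'' in your loop case is not well-founded as written: the extension $D\cup A'$ has part of its boundary running along the vertical annulus $A\subset S$, so after pushing off you must re-verify that the new loop $\gamma'$ is again trivial in its page and that the new disk's interior misses $S$; nothing guarantees the process terminates in an essential-in-$S$ curve, which is precisely what the cited lemma supplies. Second, your arc case constructs a bridge disk by pushing $\beta$ across the solid torus neighborhood of $L$; this needs care about intersections with the meridian disks $\Sigma\setminus N$ and about $\alpha$ being essential in $\Sigma\setminus L$. The paper sidesteps bridge disks entirely, since the generalized Lemma~19 converts trivial arcs into trivial-in-page, essential-in-$S$ loops and only a compressing disk is ever needed. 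So your route is viable in outline, but the technical heart is missing rather than different.
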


\begin{proof}
If $S$ is not essential then by definition there is a horizontal subsurface $F \subset \Sigma_t$ whose boundary contains a trivial loop.  Lemma~19 in~\cite{me:stabs2} states that if $S$ is tight and there is a horizontal loop in $S$ that is trivial in some $F_t$ then there is a horizontal loop that is essential in $S$ and trivial in some $F_t$. The Lemma generalizes directly to the case when $F_t$ has boundary, implying that if there is a horizontal arc or loop in $S$ that is trivial in $F_t$ then there is an essential loop in $S$ that is trivial in $F_t$. An innermost (in $\Sigma$) such loop bounds a disk in $N$ that defines a compressing disk for $S$, contradicting the assumption that $S$ is the intersection of $N$ with an index-zero surface.
\end{proof}

We will show in later sections that index-one and -two surfaces are isotopic to surfaces that are either essential or have a slightly more complicated form. A subset $S \cap F_t$ of a surface $S \subset N$ is a \textit{subsurface with a flipped square} if it consists of the union of a subsurface $E$ of $S$ and a disk $E'$ in $S$ such that $E \cap E'$ consists of four points.  Moreover, the intersection of the vertical annuli or bands just above $F_t$ with those just below $F_t$ is these same four points.  In such a subsurface, the disk $E'$ faces the direction opposite from $E$. We will say that $S \cap F_t$ has $n$ flipped squares if $S \cap F_t$ is the union of a subsurface of $S$ and $n$ disks, each of which intersects the subsurface in four points where vertical annuli and bands above and below that level intersect.
\begin{figure}[htb]
  \begin{center}
  \includegraphics[width=3.5in]{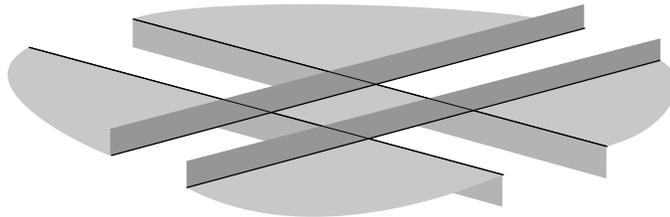}
  \caption{A flipped square is a horizontal disk that intersects the rest of the horizontal subsurface in four points and faces the opposite way.}
  \label{flippedfig}
  \end{center}
\end{figure}

Recall that a K-disk is either a compressing disk or bridge disk. If two K-disks for $S$ have disjoint boundaries in the complement of $L$ then either the disks define a face in the complex of surfaces or they consist of two bridge disks whose boundaries intersect in exactly two points in $L$.  If $S$ has a pair of horizontal disks $D_1$, $D_2$ induced by bridge disks that intersect in two points in $L$ then a regular neighborhood in $S$ of $\partial D_1 \cup \partial D_2$ and their two common boundary components is an annulus $A$, punctured twice by $L$ which we will call a \textit{one-bridge annulus}.  

\begin{Def}
A surface $S \subset N$ is an \textit{index-$n$ essential surface} if it consists of horizontal subsurfaces with $k$ flipped squares, $\ell$ index-one annuli, where $n = k + \ell$, and the horizontal boundaries of the remaining vertical arcs and loops are all essential.
\end{Def}

We will see below that higher index surfaces can be made essential only if we allow flipped squares and one-bridge annuli.  This is analogous to Bachman's result~\cite{bach:index} that topologically minimal surfaces can be made to intersect a family of incompressible surfaces essentially, as long as we allow some number of tangencies with the incompressible surfaces.

\section{Disks and band moves}
\label{diskbandsect}

To make an index-one or index-two tight surface $S \subset N$ essential, we will use its K-disks to define a sequence of moves of the following type:

Let $E_1, E_2 \subset S$ be horizontal subsurfaces of a tight surface $S \subset N$ in consecutive levels of $S^1$ such that (without loss of generality) $E_1$ is below $E_2$ and first assume the subsurfaces face opposite ways.  Let $I \subset S^1$ be the interval between the levels containing $E_1$ and $E_2$.  We can think of these as subsurfaces of $F$.  Let $\alpha$ be a properly embedded arc in $E_1$ such that the endpoints of $\alpha$ are in vertical bands or annuli that are above $E_1$. 

Because we can locally project $\alpha$ between pages, we will abuse notation and talk about the intersection of $\alpha$ with subsurfaces in other pages. Assume $\alpha \cap E_2$ is empty or consists of a regular neighborhood in $\alpha$ of one or both of its endpoints. Then the vertical band $\alpha \times I$ between the levels containing $E_1$ and $E_2$ forms a disk whose boundary consists of an arc in $S$ and a horizontal arc $\alpha'$ which is the closure of $\alpha \setminus E_2$.

The endpoints of $\alpha'$ are contained in one or two vertical annuli or bands above $E_2$.  Assume $I' \subset S^1$ is the interval above $I$ such that the level defined by the upper endpoint of $I'$ contains horizontal subsurfaces $E_3$ of $S$ facing the same way as $E_1$. Note that this implies $\alpha'$ will be disjoint from $E_3$. Then $\alpha' \times I'$ is a disk whose boundary intersects $S$ in two vertical arcs.  The union $D = (\alpha \times I) \cup (\alpha' \times I')$ is a disk whose boundary consists of an arc in $S$ and the horizontal arc parallel to $\alpha'$.  Let $S'$ be the result of isotoping $S$ across the disk $D$ as in Figure~\ref{bandmovefig}, then pulling the resulting flat surface tight.
\begin{figure}[htb]
  \begin{center}
  \includegraphics[width=5in]{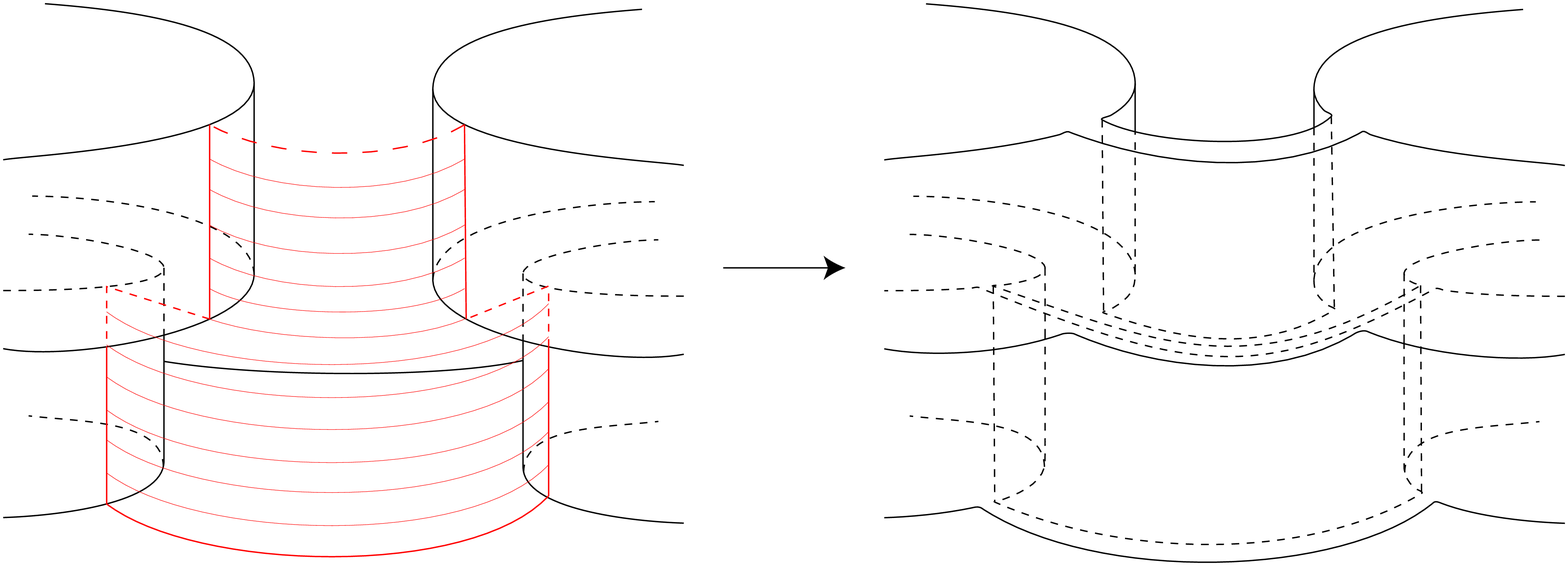}
  \put(-250,0){$\alpha$}
  \caption{The red disk defines a band move from the surface on the left to the surface on the right.}
  \label{bandmovefig}
  \end{center}
\end{figure}

\begin{Def}
We will say that $S'$ is the result of a \textit{band move} on $S$.
\end{Def}

A similar move can be defined under a number of weaker circumstances. Assume there is an essential subsurface $E' \subset E_2$ such that in $F$, the intersection $\alpha \cap E'$ is either empty or consists of interval neighborhoods of one or both endpoints of $\alpha$.  Then we can form a new flat surface as follows:  Isotope the subsurface $E'$ of $E_2$ down to the level defined by the midpoint of $I$. The arc $\alpha$ now intersects the horizontal subsurface above $E_1$ in a way that allows us to perform a band move, creating a non-empty up-facing level in the interior of $I$.  

If $E_1$, $E_2$ face the same way, we can imagine an empty subsurface between them. In this case the band move takes a subsurface from $E_1$ directly into $E_2$. Similarly, if $E_1$ and $E_2$ face opposite directions, but the next level $E_3$ faces the same way as $E_2$, we can imagine an empty subsurface just above $E_2$ and move a band from $E_1$ into this empty subsurface. In all these cases, we will also say that the resulting surface $S'$ is a \textit{band move} of $S$.

Before we begin using band moves in Section~\ref{indxonesect}, we need two technical lemmas.

\begin{Lem}
Assume $S$ is the intersection of $N$ with a surface $\Sigma \subset M$. If $S$ is a tight surface in $M$ then every bridge disk for $\Sigma$ can be isotoped so that its intersection with $\partial N$ is a level arc in some $\partial F_t$.
\end{Lem}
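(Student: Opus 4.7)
The plan is to perform an ambient isotopy of $D$ supported in a tubular neighborhood of $\alpha := D \cap L$ that rotates $D$ around the component of $L$ containing $\alpha$, arranging that $D$ emerges from $L$ at a constant angle and hence that $D \cap \partial N$ lies in a single longitude $\partial F_{t_0} \cap \partial N$. Choose local coordinates $(r, \theta, s)$ in a tubular neighborhood of this component of $L$, with $L = \{r = 0\}$, $T := \partial N = \{r = \epsilon\}$, and $\pi = \theta$; then the longitudes $\partial F_t \cap T$ are exactly the circles $\{\theta = t\} \subset T$. Parametrize $\alpha$ by $s \in [0,1]$ with endpoints $p, q \in \Sigma \cap L$, and let $\theta(s)$ denote the angle at which $D$ departs $L$ at $\alpha(s)$; then the push-off $\alpha' := D \cap T$ is the curve $s \mapsto (\theta(s), s)$ on $T$, and $\alpha'$ is a level arc precisely when $\theta$ is constant.

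Two steps suffice. First, since $\Sigma$ meets $L$ only at the isolated points $p, q, \ldots$, the interior of $\alpha$ admits a solid tubular neighborhood $U \subset M$ disjoint from $\Sigma$; within $U$ the portion of $D$ along $\alpha$ is an embedded strip determined up to isotopy rel boundary by $\theta(s)$, so one can homotope $\theta(s)$ on the interior of $\alpha$ to be equal to some constant $\theta_0$ while fixing the boundary values $\theta(0), \theta(1)$. Second, to enforce $\theta(0) = \theta(1) = \theta_0$, isotope $\beta := D \cap S$ near its endpoints so as to slide the endpoints of $\alpha'$ along the meridional circles $m_p, m_q \subset \partial S \cap T$ coming from the transverse crossings of $\Sigma$ with $L$ at $p$ and $q$. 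Each $m_i$ meets every longitude $\{\theta = t\}$ in exactly one point, so any target $\theta_0$ is achievable. Combining the two steps produces $\theta(s) \equiv \theta_0$, so $\alpha' \subset \partial F_{\theta_0} \cap T$ is a level arc as required.

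The main obstacle is verifying that these local modifications can be assembled into an ambient isotopy of $M$ under which $D$ stays embedded and acquires no new intersections with $\Sigma$. Away from $p, q$ the rotation lives in $U \subset M \setminus \Sigma$, which handles the interior of $\alpha$. Near $p$ and $q$, the endpoint slide drags $\beta$ and $\alpha'$ along $m_p$ (respectively $m_q$), and this is precisely the step where the tightness of $S$ enters: by Lemma~\ref{tightsurfacelem} and the definition of tight, $\partial S \cap T$ is a disjoint union of simple closed curves built from longitudinal arcs (boundaries of horizontal subsurfaces) and transverse arcs (sides of vertical bands), so the meridional components $m_p, m_q$ are disjoint from every other component of $\partial S$. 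The slide therefore crosses no other component of $\partial S$, and interpolating between the slide on $T$ and the rotation in $U$ via a collar of $T$ produces the required ambient isotopy.
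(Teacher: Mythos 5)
There is a genuine gap at the very first step: you assume from the outset that $D$ meets the solid torus neighborhood $M \setminus N$ of $L$ only in a strip along $\alpha = D \cap L$, so that $D \cap \partial N$ is the single push-off arc $s \mapsto (\theta(s),s)$. Nothing in the hypotheses guarantees this. A priori the interior of $D$ can dip in and out of the neighborhood, producing circle components of $D \cap \partial N$, and the arc $\beta = \partial D \cap \Sigma$ can run into and out of the neighborhood without touching $L$, producing additional arc components of $D \cap \partial N$ with endpoints on $\beta$. Since the conclusion is that $D \cap \partial N$ is a \emph{single} level arc, reducing to your ``clean'' picture is the bulk of the work, and it is exactly what the paper's proof spends most of its effort on: using that $\Sigma$ intersects $M \setminus (N \cup L) \cong T^2 \times I$ in vertical (meridional) annuli -- this is where the tightness of $S$ actually enters -- one pushes the arcs of $\beta$ with no endpoint on $L$ out of the neighborhood within $\Sigma$, and then removes innermost disks of $D$ inside the neighborhood that are disjoint from $\Sigma$. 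Only after this cleanup is $D \cap (M\setminus N)$ a band about $\alpha$, and only then does your straightening argument apply. Your use of tightness (disjointness of the components of $\partial S$ on $T$) addresses a much easier point and does not substitute for this reduction.

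A secondary, smaller issue: even in the clean case, your Steps 1 and 2 only arrange $\theta(0)=\theta(1)=\theta_0$; the resulting map $\theta:[0,1]\to S^1$ may still have nonzero winding, in which case $\alpha'$ wraps around $T$ in the page direction and is not a level arc. The winding can be absorbed by full $2\pi$ rotations of $D$ and $\beta$ about $L$ near $p$ or $q$ (the same move as your Step 2), but this needs to be said; as written, ``combining the two steps produces $\theta(s)\equiv\theta_0$'' does not follow. This second point is fixable with your own tools; the missing reduction of $D\cap\partial N$ to a single arc is the substantive omission.
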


\begin{proof}
The complement $M \setminus (N \cup L)$ is homeomorphic to a torus cross an interval and the intersection of $\Sigma$ with this submanifold is a collection of properly embedded, vertical annuli. The intersection of the boundary of a bridge disk $D$ with $N$ consists of a collection of arcs with a total of two endpoints in $L$. The arcs that do not have endpoints in $L$ can be isotoped within $\Sigma$ out of $N$. We can further isotope $D$ to eliminate any disks of intersection disjoint from $\Sigma$.  After this isotopy, the intersection of $D$ with $M \setminus (N \cup L)$ will be a vertical band containing $\alpha$, so $D \cap \partial M$ will be a single arc parallel to $\alpha$ and can be isotoped to a level arc in $\partial N$.
\end{proof}

For any K-disk $D$ for $S$, we can isotope $D$ so that the restriction of $\pi$ to the interior of $D$ is a Morse function whose level sets consist of loops, properly embedded arcs and saddles.  (A saddle is a graph with one valence four vertex in the interior of $D$ and zero or more valence one vertices in the boundary.) 

A \textit{tetrapod} is a saddle with four vertices in $\partial D$.  Such a level $\tau$ cuts $D$ into four disks. If $D$ is a bridge disk then one of the four disks contains the arc $\partial D \cap N$. If three of the disks are disjoint from $\partial N$ and do not contain any tetrapods then we will say that $\tau$ is an \textit{outermost tetrapod}. As Schultens showed in~\cite{Schl}, if $D$ is not vertical or horizontal then there must be an outermost tetrapod. The Lemma below also appears in~\cite{me:stabs2} but we include it here for completeness.

\begin{Lem}
\label{tetrapodisotopylem}
If a disk $D$ for a tight surface $S$ containing an outermost tetrapod $\tau$ then there is a sequence of band moves of $S$ after which we can isotope $D$ in $N$ to eliminate $\tau$.
\end{Lem}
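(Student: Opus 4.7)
The plan is to use each outermost subdisk of $D$ cut off by the tetrapod $\tau$ as a band-move disk for $S$. After performing these band moves the surface $S$ is rearranged near the level of $\tau$ so that $D$ can then be isotoped locally to cancel the saddle critical point corresponding to $\tau$.

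To set up notation, let $t \in S^1$ be the level of the page containing the central vertex $v$ of $\tau$, so the four arms of $\tau$ meet at $v$ in the interior of $D$ and end at four points on $\partial D$. These arms cut $D$ into four subdisks $D_1, D_2, D_3, D_4$, with $D_1, D_2, D_3$ outermost: each has interior disjoint from $\partial N$ and contains no other tetrapods. Each outermost $D_i$ is bounded by two adjacent arms of $\tau$ (meeting at $v$) and an arc $\alpha_i \subset \partial D \setminus \partial N$ which lies in $S$.

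First I would analyze each $D_i$. Since $D_i$ contains no tetrapods, the only critical points of $\pi|_{D_i}$ in its interior are local extrema giving loop level sets. An innermost such loop bounds a subdisk of $D_i$ whose boundary is a loop in $S$; tightness of $S$ together with an argument as in Lemma~\ref{indexzerolem} lets us isotope such loops away. After this cleanup, each $D_i$ carries a product structure of parallel arc level sets running from the V formed by its two bounding arms of $\tau$ (at level $t$) out to $\alpha_i$, exhibiting $D_i$ as a vertical band in the sense of Section~\ref{diskbandsect}.

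Next I would use each $D_i$ as a band-move disk. The arc $\alpha_i \subset S$ (projected horizontally to the appropriate adjacent page) plays the role of the arc $\alpha$ in the definition, and the two arms of $\tau$ bounding $D_i$ together with the product structure of $D_i$ provide the vertical band realizing the move. Performing the band moves along $D_1, D_2, D_3$ in an order compatible with whether each lies above or below $t$ pushes the portions of $S$ adjacent to the $\alpha_i$ past the level $t$. After these moves, the pre-saddle and post-saddle arcs of $\pi|_D$ near $\tau$ match up in $S$ at parallel levels, so a small isotopy of $D$ localized in a neighborhood of $\tau$ smooths the saddle and eliminates $\tau$ from the level structure of $\pi|_D$.

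The main obstacle is the case analysis and the tightness bookkeeping. After each band move the resulting flat surface must be retightened via Lemma~\ref{tightsurfacelem}, and one must verify that this retightening neither disrupts the hypotheses needed for the next band move nor introduces new tetrapods of $\pi|_D$ near the working level. The cases depend on whether each $D_i$ lies above or below the tetrapod level, on the facing directions of the horizontal subsurfaces of $S$ adjacent to $\tau$, and on whether $D$ is a compressing or a bridge disk; having three outermost subdisks (rather than only two) provides the flexibility needed to order the band moves compatibly in every case. The argument mirrors the corresponding tetrapod-removal step in~\cite{me:stabs2}, with additional care required to accommodate vertical bands (which arise here because $F$ has non-empty boundary) alongside the vertical annuli considered there.
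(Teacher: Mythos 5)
Your overall strategy is the one the paper uses (clean up the three subdisks $D_1,D_2,D_3$ cut off by $\tau$, use their product structures to define band moves that carry the boundary arcs $\alpha_i$ toward the level of $\tau$, then isotope $D$), but the proposal skips the step that actually makes the cancellation possible. The band moves defined by the $D_i$ cannot push anything \emph{past} the level $t$ of $\tau$: each $D_i$ is foliated by arcs only between $\alpha_i$ and the two arms of $\tau$, so these moves only bring $\alpha_1,\alpha_3$ into the horizontal subsurface of $S$ immediately below $\tau$ and $\alpha_2$ into the one immediately above (say, with the alternating pattern forced by the tetrapod: if $\alpha_2$ lies above $D_2$ then $\alpha_1,\alpha_3$ lie below $D_1,D_3$). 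At that point the saddle of $\pi|_D$ is still unavoidable, and your assertion that the level arcs now ``match up at parallel levels'' so that a small local isotopy smooths $\tau$ is exactly the claim that needs proof. The paper supplies one further band move not defined by any $D_i$: the projections of $\alpha_1,\alpha_2,\alpha_3$ into the page containing $\tau$ are parallel to subgraphs of $\tau$ and hence pairwise disjoint up to isotopy, so one may split the horizontal subsurface containing $\alpha_1\cup\alpha_3$ into two pieces and pull the vertical band through $\alpha_2$ down past the piece containing $\alpha_1\cup\alpha_3$; only after $\alpha_2$ lies below $\alpha_1,\alpha_3$ can $D$ be isotoped to eliminate $\tau$. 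This interleaving move, and the disjointness-of-projections observation justifying it, are missing from your argument.

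A secondary problem is your cleanup of the interiors of the $D_i$. Absence of tetrapods does not rule out saddles of $\pi|_{D_i}$ with fewer than four boundary vertices, so it is not true that the only interior critical points are extrema with loop level sets; moreover an innermost level loop lies in the interior of $D$, which is disjoint from $S$, so neither tightness of $S$ nor the argument of Lemma~\ref{indexzerolem} applies to it. The correct move is Morse-theoretic: cancel each local maximum or minimum of $\pi|_{D_i}$ against the nearest saddle inside $D_i$, an isotopy supported in a ball in $N$ (pushing any intervening sheets of $S$ along, without creating new tetrapods), after which each $D_i$ is foliated by arcs with a single extremal arc $\alpha_i\subset D_i\cap\partial D$ as you intend.
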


\begin{proof}
Let $D_1$, $D_2$, $D_3$ be the disks in the complement of $\tau$ that are disjoint from $\partial N$ and from all the tetrapods other than $\tau$, labelled so that $D_2$ is adjacent to both $D_1$ and $D_3$.  The boundary of $D$ consists of vertical arcs and horizontal arcs. If the restriction of $\pi$ to $D_1$ has a local maximum then we can push this maximum down to cancel it with the nearest saddle in $D_1$. This isotopy occurs within a ball in $N$, so if a portion of $S$ is in the way, we can isotope it down as well. Moreover, this isotopy can be extended to any other $K$ for $S$ without producing new tetrapods. If we repeat the process to eliminate all maxima and minima in $D_1$, $D_2$ and $D_3$, each disk $D_i$ will be foliated by arcs, with a single maximal or minimal arc $\alpha_i \subset D_i \cap \partial D$.

Without loss of generality, assume that the horizontal arc $\alpha_2$ is above $D_2$.  Then by construction, the horizontal arcs $\alpha_1$, $\alpha_3$ in $D_1 \cap \partial D$ and $D_3 \cap \partial D$, respectively, are below $D_1$ and $D_3$.  The tetrapod $\tau$ is contained in a page $F_a$ sitting between two horizontal subsurfaces in $S$.  Each disk $D_i$ intersects every level between $\alpha_i$ and $\tau$ in an arc parallel to the projection of $\alpha_i$.  If one of these levels contains a horizontal subsurface of $S$ then the projection of $\alpha_i$ will be disjoint from it or intersect it in a neighborhood of one or both of its endpoints.  Thus there is a band move of $S$ that moves each $\alpha_i$ past each level between it and $\tau$.  

In the surface $S'$ that results from these band moves for the three disks, the arcs $\alpha_1$, $\alpha_3$ sit in the level surface of $S'$ below $\tau$ and $\alpha_2$ sits in the level subsurface surface just above $\tau$.  The projections of $\alpha_1$, $\alpha_2$, $\alpha_3$ into the level surface $\Sigma \times \{a\}$ containing the tetrapod $\tau$ are parallel to subgraphs of $\tau$, so their projections are isotopic to pairwise disjoint arcs.  The vertical disk defined by $\alpha_2$ will thus intersect a regular neighborhood of $\alpha_1 \cup \alpha_3$ in the complement of a regular neighborhood of its endpoints. Thus we can perform a band move in which we split the subsurface containing $\alpha_1 \cup \alpha_3$ into two subsurfaces and pull $\alpha_2$ down past the top one containing $\alpha_1 \cup \alpha_3$. After this band move, the arc $\alpha_2$ is below $\alpha_1$, $\alpha_3$, so we can isotope $D$ to remove the tetrapod $\tau$.
\end{proof}

\section{Index-one surfaces}
\label{indxonesect}

If a tight surface $S$ contains a horizontal subsurface with a boundary loop or arc that is trivial in $F$ then an innermost (in $F$) such loop or arc defines a compressing disk or bridge disk (respectively) for $S$.  We will call these \textit{horizontal disks}.  Any two horizontal disks are disjoint away from $L$ and the bridge disks defined any two horizontal arcs intersect in one or two points in $L$. To make a tight surface $S$ into an essential surface, we must eliminate all horizontal disks via band moves.

\begin{Lem}
\label{indexonelem}
Every index-one surface is represented by an essential surface in $N$, possibly with one flipped square or one-bridge annulus.
\end{Lem}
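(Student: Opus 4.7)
The plan is to start with a tight flat representative of the given index-one surface $S$ (using Lemma~\ref{tightsurfacelem}) and use the K-disks for $S$ to perform a sequence of band moves that eliminates every non-essential feature except the single residual defect predicted by the index-one hypothesis. Recall that since $S$ is index-one, its descending link $L_v$ is non-empty but disconnected, and this disconnectedness will correspond to the flipped square or one-bridge annulus left at the end.

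The first step is preparatory: for each relevant K-disk $D$ for $S$, isotope $D$ so that $\pi|_D$ is Morse, then apply Lemma~\ref{tetrapodisotopylem} repeatedly to eliminate outermost tetrapods via band moves of $S$. By Schultens' result, a K-disk with no tetrapods is either horizontal or vertical. After these band moves the obstructions to essentiality appear precisely as horizontal K-disks, i.e.\ as horizontal disk subsurfaces of $S$ whose boundaries are trivial loops or arcs in the corresponding page. (Throughout, I would keep $S$ tight by reapplying Lemma~\ref{tightsurfacelem} after each band move, which by construction cannot reintroduce tetrapods on the other K-disks.)

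The main step is to use these horizontal K-disks to run band moves that strip away the trivial loops and arcs. Two horizontal K-disks with disjoint boundaries (away from $L$) either span a face of $\mS(M,L)$ --- in which case they lie in a common simplex of $L_v$ and the corresponding band moves commute --- or they are two bridge disks meeting in two points of $L$, which is exactly the configuration giving a one-bridge annulus. So within a single component of $L_v$, successive band moves can be carried out to strictly reduce the number of horizontal disks, and this terminates at a tight surface whose horizontal K-disks all represent \emph{one} connected family in the descending link.

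Since $L_v$ has at least two components, this process may leave residual horizontal K-disks belonging to a different component from those already used. By the same disjointness dichotomy, two such K-disks that cannot be combined into a face must either share a boundary and face opposite ways (producing a flipped square, as in Figure~\ref{flippedfig}) or meet $L$ in two points (producing a one-bridge annulus). The definition of index-one forces the descending link to have exactly the combinatorial complexity captured by one flipped square or one one-bridge annulus, so only a single such defect survives. The main obstacle I anticipate is the bookkeeping needed to show that each band move in the reduction step genuinely decreases a complexity measure on the horizontal disks without re-creating obstructions elsewhere; I expect this to be handled by an induction on the pair (number of horizontal disk subsurfaces, number of non-essential boundary components), exactly analogously to the index-zero argument implicit in Lemma~\ref{indexzerolem}.
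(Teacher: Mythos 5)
Your preparatory steps (tight representative, tetrapod elimination via Lemma~\ref{tetrapodisotopylem}, reduction to horizontal K-disks) match the paper, but the core of the argument --- how the index-one hypothesis forces at most one flipped square or one-bridge annulus --- has a genuine gap. First, your claimed dichotomy is wrong: two horizontal K-disks are automatically disjoint away from $L$, so they either span a face of the disk complex or are two bridge disks meeting in two points of $L$; there is no third case in which two coexisting horizontal disks ``share a boundary and face opposite ways'' to produce a flipped square. In the paper a flipped square never arises from a pair of horizontal disks at all --- it arises by stopping a \emph{band move} halfway, i.e.\ it is the intermediate surface $S''$ between two consecutive surfaces in a band-move sequence, and it is essential precisely when those two surfaces have no horizontal disk in common. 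Second, your assertion that band moves ``within a single component of $L_v$'' strictly decrease the number of horizontal disks and terminate is unsupported, and your final claim that ``the definition of index-one forces the descending link to have exactly the combinatorial complexity captured by one flipped square or one one-bridge annulus'' is not a statement index-one makes: index-one only says $L_v$ is non-empty and disconnected, and by itself bounds nothing about how many defects could appear.

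The missing idea is the following contradiction argument, which is how the paper actually uses disconnectedness. Choose K-disks $D^-$, $D^+$ in \emph{different} components of the disk complex, and use tetrapod elimination on each to build a single chain of tight surfaces $S_{-\ell},\dots,S_0,\dots,S_k$ via band moves, ending with $D^-$ and $D^+$ horizontal at the two ends. If no $S_i$ is essential, pick a horizontal disk $D_i$ in each. For each consecutive pair $S_i$, $S_{i+1}$, either some horizontal disk is common to both (giving, if it is not a bridge disk meeting $D_i$ in two points of $L$, edges in the disk complex linking $D_i$ to $D_{i+1}$), or one finds a one-bridge annulus, or no common disk exists. If a common disk existed at every step, the resulting edge path would connect $D^-$ to $D^+$ inside the disk complex, contradicting that they lie in different components. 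Hence at some step either a one-bridge annulus appears or there is no common horizontal disk, and at that one step halting the band move midway yields the essential surface with a single flipped square, since the horizontal loops and arcs of the halfway surface away from the square are exactly those shared by the two adjacent surfaces and are therefore all essential. Without this mechanism your proof does not establish termination, does not locate the defect, and does not explain why only one defect survives.
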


\begin{proof}
Let $S$ be the intersection of $N$ with a surface representing an index-one vertex $v$ in $\mS(M, H)$. Let $D^-$, $D^+$ be a pair of K-disks in different components of the disk complex for $S$.  (In general, $D^-$ will be on the negative side of $S$ and $D^+$ on the positive side, but for the sake of generality that will be useful later, we will not specify this.)

If $D^+$ is not vertical or horizontal then it contains an outermost tetrapod and there is a sequence of band moves defining a sequence of tight surfaces $S_0,S_{1},\dots,S_{k'}$ that eliminate the tetrapod.  If the image of $D^+$ after these band moves is still not vertical or horizontal then we can extend this sequence by further band moves until $D^+$ contains no tetrapods, and is thus vertical or horizontal.  If the final image of $D^+$ is vertical then either $S$ is the boundary of a neighborhood of a horizontal loop or there is a final band move that makes $D^+$ horizontal. In the first case, $S$ cannot be index-one, so we will assume that we can make $D^+$ horizontal.

Let $S_0,\dots,S_k$ be the resulting sequence of tight surfaces. Define a similar sequence $S_0,S_{-1},\dots,S_{-\ell}$ based on the disk $D^-$.  If some $S_i$ is essential then we have found an essential representative of $S$. Otherwise, as noted above, each $S_i$ contains an essential loop or arc bounding a horizontal K-disk $D_i \subset F_t$.

The band move from $S_0$ to $S_1$ consists of three parts:  First, we are allowed to separate a horizontal subsurface of $S_0$ into two subsurfaces, along a collection of essential curves, producing $S'_0$.  Next, we move a band from a second horizontal subsurface past one of these, to get $S'_1$.  Finally, we make $S'_1$ tight to produce $S_1$.  Splitting a horizontal subsurface of $S_0$ does not create or eliminate any trivial vertical bands or annuli, so any horizontal disk for $S_0$ is isotopic to a horizontal disk for $S'_0$ and vice versa.  Similarly, pulling $S'_1$ tight does not produce new horizontal loops or arcs, so any horizontal disk for $S_1$ is isotopic to a horizontal disk for $S'_1$.

If there is a trivial loop or arc in $S'_0$ that is also contained in $S'_1$ then there is a horizontal K-disk $D'_0$ in both surfaces (and thus in both $S_0$ and $S_1$). Thus $D'_0$ is disjoint in $S$ from both $D_0$ and $D_1$ or coincides with one of these disks. 

If $D_0$ and $D'_0$ define bridge disks that intersect in two points in $L$ then they define a one-bridge annulus for $S$. If some other level loop or arc in $S'_0$ bounds a K-disk disjoint from both $D_0$ and $D'_0$ then we will replace $D'_0$ with this disk. Otherwise, the rest of the surface is essential so $S'_0$ is an index-one essential surface with one one-bridge annulus. A similar argument applies to the disks $D'_0$ and $D_1$.  If there are no one-bridge annuli then there are edges in the disk complex for $S$ from $D_0$ to $D'_0$ and from $D'_0$ to $D_1$.

We can repeat this argument for each sequential pair $D_i$, $D_{i+1}$. If we find a disjoint disk $D'_i$ for each, then we will have constructed a path in the disk complex for $S$ from $D^- = D_{-\ell}$ to $D^+ = D_k$, contradicting the assumption that these disks are in distinct path components. Thus there must be a value $i$ such that no horizontal disk is common to both $S_i$ and $S_{i+1}$. 

Let $E$ be the disk defining the band move from $S'_0$ to $S'_1$.  The move affects three horizontal levels, which we will label $F_- < F_0 < F_+$.  If we perform only the first half of the band move, which brings the band into the level of $F_0$, then the resulting surface $S''$ contains a flipped square in this level, as in Figure~\ref{flippedbandfig}.  
\begin{figure}[htb]
  \begin{center}
  \includegraphics[width=5in]{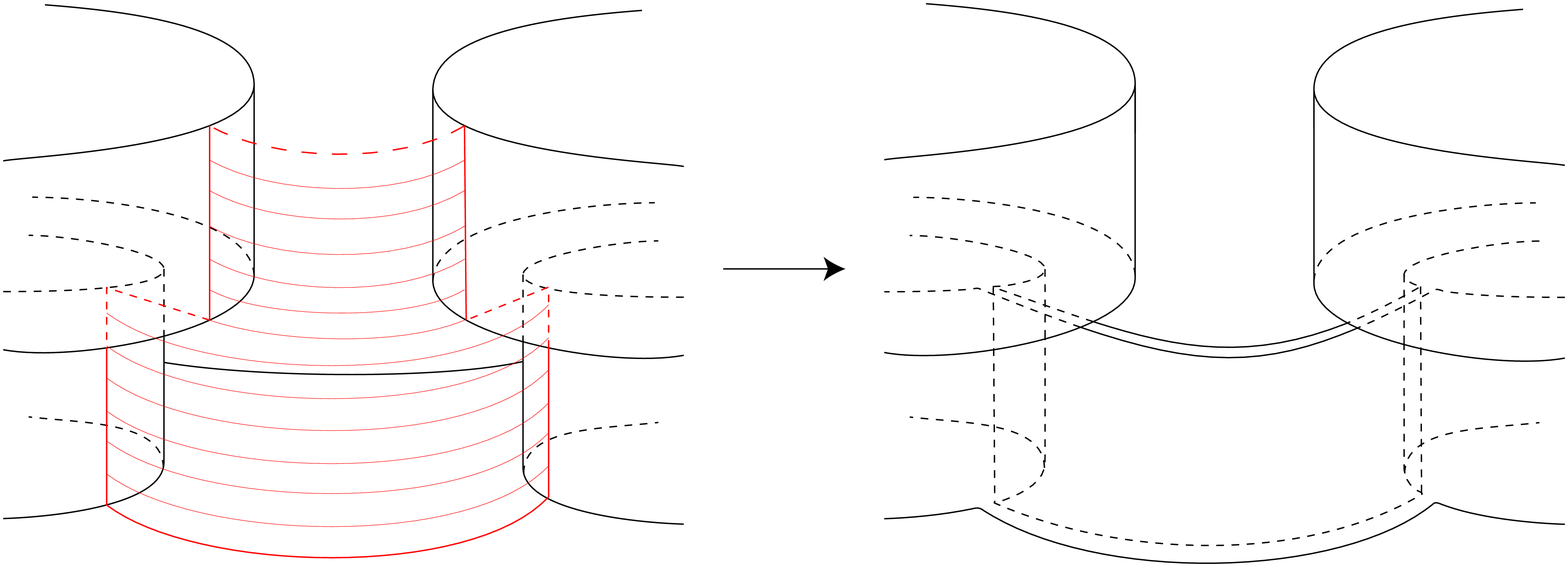}
  \caption{Stopping half way through the band move defined by the red disk defines a surface with a flipped square.}
  \label{flippedbandfig}
  \end{center}
\end{figure}

The horizontal loops and arcs in $S''$ disjoint from the flipped square are precisely those loops and arcs contained in both $S'_0$ and $S'_1$. By assumption, these are all essential, so $S''$ is an essential surface with one flipped square.
\end{proof}

\section{Index-two surfaces}
\label{indxtwosect}

\begin{Lem}
\label{indextwolem}
Every index-two surface is represented by an essential surface in $N$ with up to two flipped disks or one-bridge annuli.
\end{Lem}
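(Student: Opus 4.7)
The strategy is to mirror the proof of Lemma~\ref{indexonelem}, with the pair of K-disks in distinct components of the disk complex replaced by an essential loop in that complex.

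Since the vertex $v$ is index-two, the descending link $L_v$ is connected and $\pi_1(L_v)$ is non-trivial. I would first choose a combinatorially minimal essential edge loop $D^1,D^2,\ldots,D^n=D^1$ in $L_v$, where each $D^i$ is a K-disk and consecutive disks either coincide or are disjoint (hence bound an edge of $L_v$). For each transition from $D^i$ to $D^{i+1}$, I apply the outermost-tetrapod removal from Lemma~\ref{tetrapodisotopylem} and the iterated band-move construction of Lemma~\ref{indexonelem}, building a sequence of tight surfaces in which first $D^i$ and then $D^{i+1}$ is brought into horizontal position. Concatenating these sequences around the whole loop produces a cyclic sequence of tight surfaces in which the role of ``horizontal K-disk'' cycles through (horizontalized versions of) the $D^i$.

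At each individual band move, the same dichotomy as in Lemma~\ref{indexonelem} applies: either (a) there is a horizontal K-disk $D'$ present both in the surface before and the surface after the move, furnishing two edges of $L_v$ that connect the horizontal representatives of consecutive $D^i$ through $D'$, or else (b) the intermediate tight surface (obtained by performing only half of the band move) contains a flipped square or a one-bridge annulus and is essential away from that feature, exactly as constructed in the proof of Lemma~\ref{indexonelem}. If case (b) occurs at most once around the entire cyclic sequence, then the edges produced in case (a), together with the edges furnished by the outermost-tetrapod isotopies and by the band moves that are successfully completed, assemble into a disk in $L_v$ bounding the original loop. This contradicts the assumption that the loop is essential in $\pi_1(L_v)$. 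Hence case (b) must occur at least twice, yielding the desired essential representative with up to two flipped squares or one-bridge annuli. (If both anomalies happen to appear on a single tight surface, then one tight surface already gives the representative.)

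The main technical obstacle I anticipate is verifying that a single ``anomaly'' cannot obstruct the loop from contracting. In Lemma~\ref{indexonelem} one failure was enough to prevent connecting two components, but a loop in a connected space can always be rerouted around a single obstruction, and making this rigorous requires showing that the relevant local subcomplex of $L_v$ attached by a single band-move transition is simply connected. This should follow by identifying that subcomplex with the disk complex of a simpler surface (in the spirit of \cite{bach:index}), so that any locally arising $\pi_1$-class can be pushed off into the surrounding complex. The careful combinatorial bookkeeping of these local null-homotopies as one goes around the cyclic sequence is where I expect the bulk of the work to lie.
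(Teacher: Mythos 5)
There is a genuine gap, and it is exactly at the point where the contradiction with index two must be extracted. Your construction is one-dimensional: you run around the loop $B_1,\dots,B_\ell$ producing a single cyclic sequence of tight surfaces and, in case (a), a chain of common horizontal disks. In the index-one setting such a chain contradicts the hypothesis because it connects $D^-$ to $D^+$, i.e.\ it kills a $\pi_0$ obstruction. For index two the hypothesis is that the loop is essential in $\pi_1(L_v)$, and a cyclic chain of common disks joined by edges merely produces another loop in $L_v$ that runs near the original one; since $L_v$ is connected, nothing about that is contradictory. To contradict non-triviality in $\pi_1$ you must actually fill the original loop with a disk, and that requires two-parameter data. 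The paper gets it by observing that consecutive disks $B_i$, $B_{i+1}$ are disjoint, so their band moves commute, giving a rectangle of flat surfaces $S^i_{j,k}$; one then needs a common horizontal disk not just for each band move (each edge of the grid) but for each \emph{square} of the grid (all four surfaces at once), and flagness of the disk complex then assembles these into triangles filling the loop. Your proposal never formulates the square-level condition, so even in the ``case (a) everywhere'' scenario you have no null-homotopy and no contradiction.

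This omission is also why your accounting of the anomalies comes out wrong. The bound of two in the statement does not arise from two separate single-anomaly events at different places in the cycle: a single occurrence of your case (b) already gives an essential surface with one flipped square or one one-bridge annulus, which satisfies ``up to two'' outright, so counting to two occurrences buys nothing. The surfaces that genuinely require two anomalies appear only when the square-level common disk fails: stopping the two commuting band moves of a grid square halfway yields one surface with two flipped squares, and the bridge-disk variants (a common disk forced to meet a loop disk in two points of $L$) yield two one-bridge annuli or one flipped square plus one one-bridge annulus, with a separate case analysis depending on whether the offending disk is associated to a surface or to a band move. Your anticipated fix --- rerouting the loop around a single obstruction by showing a local subcomplex is simply connected --- is aimed at the wrong difficulty; the missing ingredient is the commuting grid and the square-by-square filling argument, not a local simple-connectivity statement.
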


\begin{proof}
Because $S$ is an index-two surface, there is a sequence $B_1,\dots,B_\ell$ of K-disks forming an essential loop in the disk complex for $S$. Applying the construction in Lemma~\ref{indexonelem}, we can find for each disk $B_i$ a sequence of surfaces $S^i_j$, each ending in a surface in which $B_i$ is horizontal. 

Consecutive disks $B_i$, $B_{i+1}$ in the sequence are disjoint so the band moves defined by $B_i$, $B_{i+1}$ are disjoint and thus commute with each other.  In other words, performing the first $j$ band moves defined by $B_i$, and the first $k$ band moves defined by $B_{i+1}$ in any order produces the same flat surface, which we will call $S^i_{j,k}$. We will think of these arranged in a rectangle, as in Figure~\ref{squarefig}. Under this convention, $S^{i-1}_{0,j} = S^i_j = S^i_{j,0}$. 

If any one of these surfaces $S^i_{j,k}$ is essential, then we have found our essential representative for $S$. Otherwise, we will choose a horizontal disk $D^i_{j,k}$ for each $S^i_{j,k}$. If $j$ and $k$ are the highest indices in the sequences defined by $B_i$, $B_{i+1}$ then we will let $D^i_{j,h} = B^i$ for $h \leq k$ and $D^i_{h,k} = B^{i+1}$ for $h < j$.
\begin{figure}[htb]
  \begin{center}
  \includegraphics[width=1.5in]{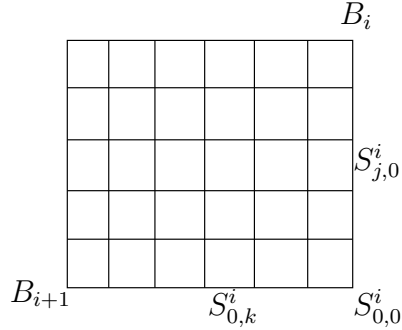}
  \put(0,-10){$S^i_{0,0}$}
  \put(-55,-10){$S^i_{0,k}$}
  \put(-5,100){$B_i$}
  \put(0,45){$S^i_{j,0}$}
  \put(-130,-5){$B_{i+1}$}
  \caption{The labelling for the rectangle of flat surfaces induced by $B_i$, $B_{i+1}$.}
  \label{squarefig}
  \end{center}
\end{figure}

By construction, any two surfaces $S^i_{j,k}$, $S^i_{j+1,k}$ or $S^i_{j,k}$, $S^i_{j,k+1}$ are related by a band move. As in the proof of Lemma~\ref{indexonelem}, one of three things can happen: Either one of the surfaces is essential with a one-bridge annulus, the intermediate surface is essential with a flipped square, or there is a horizontal disk contained in both surfaces. If either of the first two cases occurs, the proof is complete. Otherwise, assume that there is a horizontal disk common to every such pair of surfaces, and choose a disk for each pair. To avoid excessive notation, we will not label this disk, but instead say that it is the disk \textit{associated to} the band move.

Every square of surfaces $S^i_{j,k}$, $S^i_{j+1,k}$, $S^i_{j,k+1}$, $S^i_{j+1,k+1}$ is defined by two disjoint band moves between the four surfaces. If there is no horizontal disk in common with all four surfaces, then, similarly to the index-one case, stopping both band moves mid-way produces an index-two surface with two flipped squares.  If this occurs for any of the squares then, again, the proof is complete.

Thus we will assume that for each such square, there is a horizontal disk $D'^i_{j,k}$ common to all four of the surfaces. The disks defined by the surfaces and by the band moves determine a loop in the disk complex with up to eight vertices and $D'^i_{j,k}$ is disjoint from all these disks (possibly isotopic to one or more of them). If $D'^i_{j,k}$ is not a bridge disk that intersects one of the loop disks in two points in $L$ then $D'^i_{j,k}$ is connected to each vertex of the loop by an edge in the disk complex for $S$. Because the disk complex is flag, this defines a collection of triangles forming disk bounded by the loop. 

If this were the case for each square in the grid defined by the loop $B_1,\dots, B_k$ then the union of disks defined in this way would form an immersed disk bounded by this loop. However, this contradicts the assumption that the loop $B_1,\dots, B_k$ is homotopy non-trivial in the disk complex.

Thus for some square of surfaces, any horizontal disk common to the four flat surfaces is a bridge disk intersecting one of these bridge disks $D''$ in two points in $L$. Let $D''_-$, $D''_+$ be the disks before and after $D''$ in the loop. There are two cases to consider, depending on whether $D''$ is associated to a surface $S^i_{j,k}$ or to a band move.

If $D''$ is associated to a surface, say $S^i_{j,k}$, then the disks $D''_-$, $D''_+$ are associated to band moves and thus are also horizontal disks in $S^i_{j,k}$ (since each is shared by $S^i_{j,k}$ and a second surface.) If these are not bridge disks intersecting in two points then the square between $D'', D''_-, D'^i_{j,k}$ and $D''_+$ can be filled in by two triangles containing an edge from $D''_-$ to $D''_+$. Otherwise, if they are such a pair of bridge disks then $D''_-, D''_+$ and $D'', D'^i_{j,k}$ define two one-bridge annuli, making $S^i_{j,k}$ an index-two surface with two once-punctured annuli.

If $D''$ is shared between two flat surfaces then stopping the band move half way between the two surfaces produces a flat surface with one flipped square, in which the only trivial loops or arcs are the arcs that define $D'^i_{j,k}$ and $D''$. Thus this surface is index-two with one flipped square and one one-bridge annulus.
\end{proof}

After reading this proof, a rough outline for generalizing the theorem to higher indices should be clear. However, for our present purposes, we will only need to understand surfaces of index zero, one and two.

\section{Distance}
\label{distsect}

In this section and the next, we show that the existence of essential flat surfaces in $N$ implies a bound on the displacement distance of an open book decomposition. We will divide the problem into a number of cases, depending on the index of the essential flat surface $S$ and the number of points in $S \cap L$. 

\begin{Lem}
\label{distboundeasylem}
If $S$ is a genus $g$ essential flat surface and $S \cap L \neq \emptyset$ then $d(\pi) \leq \max \{4g, 3\}$.
\end{Lem}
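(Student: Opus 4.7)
The plan is to construct an explicit edge-path in $\mC(F)$, of length at most $\max\{4g,3\}$, from some vertex $v$ to its image $\phi(v)$. The length of the path will be bounded by the number of horizontal subsurfaces of $S$, and the main work is then to control that number using the genus.

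First I would dispose of the degenerate case in which $S$ is (isotopic into) a single page: because $S\cap L\neq\emptyset$ forces $\partial S\cap\partial F\neq\emptyset$, the boundary loops/arcs of $S$ on $\partial F$ give vertices of $\mC(F)$ fixed by $\phi$ (the monodromy restricts to the identity on $\partial F$), so $d(\phi)=0$ and the bound is immediate. In the remaining case, enumerate the horizontal subsurfaces $E_0,E_1,\dots,E_{h-1}$ of $S$ cyclically in order of their $S^1$-levels (restricting to a connected component of $S$ that meets $L$ and crosses every level if $S$ is disconnected). Between $E_i$ and $E_{i+1}$ the flat decomposition provides at least one vertical annulus or band $V_i$; let $u_i\in\mC(F)$ be the vertex represented by its projection, which is essential by hypothesis.

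The key observation is that both $u_{i-1}$ and $u_i$ are internal boundary curves of the horizontal subsurface $E_i\subset F$: $u_{i-1}$ is where $V_{i-1}$ attaches to $E_i$ from below, and $u_i$ is where $V_i$ attaches to $E_i$ from above. As disjoint boundary components of the same subsurface $E_i$, their representatives are disjoint in $F$, so $d_{\mC(F)}(u_{i-1},u_i)\le 1$. Concatenating these $h$ one-step moves and closing up through the monodromy identification $F\times\{0\}\sim_\phi F\times\{1\}$ yields $d(u_0,\phi^{\pm 1}(u_0))\le h$, and therefore $d(\phi)\le h$.

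The remaining task, and the main obstacle, is to show $h\le \max\{4g,3\}$. The flat decomposition gives $\chi(S)=\sum_i\chi(E_i)-b$, where $b$ is the number of vertical bands; combined with $\chi(S)=2-2g-|\partial S|$, this yields $\sum_i \chi(E_i)=2-2g-|\partial S|+b$. Each $\chi(E_i)\le 1$, with equality only for disks, and essentiality forces every horizontal disk subsurface to carry at least one arc of $\partial F$, so that every disk contributes to $|\partial S|$ and to $b$. Tightness (Lemma~\ref{tightsurfacelem}) restricts further how adjacent horizontal disks and annuli can be stacked, ruling out long strings of small subsurfaces facing the same way. A case analysis balancing these constraints against Euler characteristic should give $h\le 4g$ once $g\ge 1$ and $h\le 3$ in the planar case $g=0$.

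The delicate point will be this last counting step, especially the planar case: for $g=0$ the surface $S$ has $\chi(S)=2-|\partial S|$, and the constant $3$ is sharp, so one must use essentiality and tightness carefully to rule out all but at most three horizontal pieces.
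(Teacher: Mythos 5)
Your first step (building a path in $\mC(F)$ from the cross-sections of $S$, one move per horizontal level, closing up through $\phi$) is sound and is essentially how the paper starts; the disjointness of consecutive cross-sections holds because the projection of $S\cap F_{c_i}$ contains, up to isotopy rel $m$, the projections of both $S\cap F_{b_{i-1}}$ and $S\cap F_{b_i}$ (your justification via boundary curves of $E_i$ is slightly off, since a vertical band can pass through the level $c_i$ without attaching to $E_i$, but the conclusion stands). The genuine gap is the step you defer to the end: the inequality $h\le\max\{4g,3\}$, where $h$ is the number of horizontal subsurfaces, is not merely delicate --- it is false. Nothing bounds $h$ in terms of $g$ alone: with $g=0$ and $2n$ points of $S\cap L$, an essential tight planar surface can be arranged as a ``staircase'' of roughly $2n-2$ horizontal rectangles, each adjacent to four vertical bands, so $h$ grows linearly in $n$ while the lemma's bound stays at $3$. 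Euler characteristic plus tightness can only bound the number of vertical \emph{bands} (by $-2\chi(S\setminus L)=4g+4n-4$); it cannot bound the number of levels, since levels whose horizontal pieces meet only vertical annuli contribute nothing to the count. So a path that advances one step per level cannot yield a bound independent of $n$.

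The missing idea is an averaging argument that uses the hypothesis $S\cap L\neq\emptyset$ much more strongly than your degenerate-case remark does. Since $S\cap\partial F_t$ has exactly $2n$ points at every level, each $S\cap F_{b_i}$ contains exactly $n$ essential arcs, and these organize into $n$ parallel ``tracks'' $(\alpha_{i,j})_i$, one for each $j$, which only change when a vertical band ends. Each track is itself a path in $\mC(F)$ ending at the $\phi$-image of one of the initial arcs, so $d(\phi)$ is bounded by the number of distinct arcs in the \emph{shortest} track. Tightness guarantees every horizontal piece contributes at most $-\tfrac14$ times its number of adjacent band arcs to $\chi(S\setminus L)$, giving at most $4g+4n-4$ vertical bands in total, hence some track of length at most $\frac{4g+4n-4}{n}$, which is $\le 4g$ for $g\ge1$ and $<4$ (so $\le 3$) for $g=0$. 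Without this division by $n$, your outline cannot close.
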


Note that the bound is in terms of the genus of $S$, rather than Euler characteristic.  In other words, the bound does not depend on the number of boundary components of $S$, only on the genus of the surface that results from filling in those boundary components.  In fact, we will see below that having more than two boundary components produces a lower bound than stated.

\begin{proof}
Let $2n$ be the number of points in the intersection $S \cap L$. (The number of points must be even because $S$ is strongly separating, as well as because $L$ is homology trivial in $M$.)

Identify $S^1$ with the interval $[0,1]$ with its endpoints glued together, so that the page $F_0$ does not contain a horizontal subsurface of $S$.  Because $S$ is transverse to the link $L$, we can choose meridian disks for the complement of $N$ (which is a regular neighborhood of $L$) that are disjoint from $S$. This defines a point in each boundary component of each $F_t$ and we will let $m$ be the union of these points.

Let $c_1 < \dots < c_k$ be the levels containing horizontal subsurfaces of $S$. Let $b_1,\dots,b_k$ be levels not containing horizontal subsurfaces such that $c_i < b_i < c_{i+1}$, $b_0 = 0$ and $b_k = 1$.

The identification of $S^1$ with a quotient of $[0,1]$ determines a map $p: F \times [0,1] \rightarrow N$ that sends $F \times \{0\}$ and $F \times \{1\}$ to $F_0$.  We can project the intersection of $S$ with each $F_t$ into $F$ by taking its preimage in $p$.  This projects $S \cap F_0$ to a collection of loops and arcs and projects $S \cap F_1$ onto its image under the monodromy map $\phi$. We will show that the distance between any loop in $F_0$ and any loop in $F_1$ is at most $\max\{4g, 3\}$.

The projection of each horizontal subsurface $S \cap F_{c_i}$ is (up to isotopy disjoint from the marked points $m$) the union of the projections of $S \cap F_{b_i}$ and $S \cap F_{b_{i-1}}$.  Thus these two collections of loops project to pairwise disjoint (up to isotopy) collections of arcs and loops in $F$.  If we pick an arc or loop from each $S \cap F_{c_i}$, the result will thus be a path in $\mC(F)$.  

Recall that we have assumed there are $2n > 0$ points in $S \cap L$. The intersection $S \cap \partial (F_t)$ must contain $2n$ components for each $i$, so each $S \cap F_{b_i}$ contains $n$ arcs and some number of loops. Let $\{\alpha_{0,0}, \alpha_{0,1},\dots,\alpha_{0,n}\}$ be the arcs in $S \cap F_{b_1}$.  For each $i \leq k$, we will label the arcs $\{\alpha_{i,j}\}$ such that if there is an arc in $F_{b_i} \cap S$ isotopic to $\alpha_{i-1,j}$ then we let $\alpha_{i,j} = \alpha_{i-1,j}$.

Each sequence $(\alpha_{i,j})$ as we fix $j$ and vary $i$ defines a path in the arc complex for $F$. The final arc in the sequence is the image under the monodromy of some $\alpha_{0,j'}$ and is thus disjoint from the image of $\alpha_{0,j}$. Thus $d(\pi)$ is at most the number of arcs in the path $(\alpha_{i,j})$. The arcs $\alpha_{i,j}$ (varying both $i$ and $j$) form parallel families corresponding to the vertical bands in $S$. The length of the shortest path will thus be at most the number of vertical bands divided by $n$ (the number of arcs at each stage.)

The vertical bands and annuli define a decomposition of the surface $S$ along arcs and loops. We can calculate the Euler characteristic of $S \setminus L$ from the complementary pieces by noting that each loop contributes zero to the Euler characteristic, while each arc contributes exactly one. (The endpoints of the arc are not in the open surface $S \setminus L$, so we will not count them.)  Since each arc appears in exactly two complementary pieces, each piece adds its Euler characteristic minus half the number of adjacent arcs to the total Euler characteristic of $S \setminus L$.

Because $S$ is tight, every horizontal disk is adjacent to at least four arcs and contributes $1 - \frac{4}{2} = -1$ to the total. An annulus with one or more arcs in the boundary also contributes at most $-1$ to the total. Any other type of piece contributes strictly less than that. In particular, every piece contributes at most negative one fourth the number of adjacent arcs. Since each arc appears in two pieces, the total number of arcs is at most twice the Euler characteristic of $S \setminus L$.

The Euler characteristic of $S$ is $2 - 2g - 2n$ so for $g \geq 1$, the length of the longest path $(\alpha_{i,j})$ at most $-\frac{4 - 4g - 4n}{n} = \frac{4g + 4n - 4}{n} \leq 4g$. For $g=0$, we consider two caes: If $n = 2$ then $S$ is an annulus, there is exactly one arc at each $b_i$ and any two of these arcs are parallel so $d(\phi) = 0 $. Otherwise, if $n \geq 2$ then $\frac{4n-4}{n} < 4$. Since the length is an integer, it is at most $3$.
\end{proof}

\begin{Lem}
\label{distboundfliplem}
If $S$ is a genus $g$ essential surface with one or more flipped squares (but no one-bridge annuli) and $S \cap L \neq \emptyset$ then $d(\pi) \leq \max\{4g, 3\}$.
\end{Lem}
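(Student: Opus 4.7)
The plan is to adapt the argument of Lemma~\ref{distboundeasylem}, treating the flipped squares as additional pieces of the decomposition of $S$ along its vertical bands and annuli. As in that proof, identify $S^1$ with $[0,1]/{\sim}$ so that $F_0$ contains no horizontal subsurface of $S$, choose levels $b_0 < c_1 < b_1 < \cdots < c_k < b_k$ separating the critical levels $c_i$, project each slice $S \cap F_{b_i}$ to a multicurve in $F$ via the bundle structure, and follow the arcs $\alpha_{i,j}$ to construct a path in $\mC(F)$ from an arc of $F_0$ to its image under $\phi$. Away from the flipped squares, the argument is identical to the previous lemma.

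At a critical level $c_i$ containing a flipped square $E'$, the definition forces the vertical bands just above $F_{c_i}$ and just below $F_{c_i}$ to intersect in exactly the four corner points of $E'$. Consequently, the multicurves $S \cap F_{b_{i-1}}$ and $S \cap F_{b_i}$ project to curves in $F$ that coincide outside a disk neighborhood of $E'$ and, inside that neighborhood, reconnect four shared endpoints via a different pairing determined by $E'$. The four boundary arcs of $E'$ are essential in $F$ (since $S$ is essential outside the flipped square), so each arc $\alpha_{i-1,j}$ can be matched to an arc $\alpha_{i,j'}$ with disjoint isotopy representative in $F \setminus m$; the flipped square merely relabels the matching across the level $c_i$. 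Thus the sequence $(\alpha_{i,j})$ continues to define a path in $\mC(F)$.

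For the Euler characteristic count, we include $E'$ among the decomposition pieces of $S$. It is a disk with four adjacent arcs (its four boundary arcs), contributing $1 - \tfrac{4}{2} = -1$ to $\chi(S \setminus L)$. This exactly meets the per-arc budget of $-\tfrac{1}{4}$ that every piece of the decomposition satisfies, so the total number of vertical bands remains bounded by $2|\chi(S \setminus L)| = 4g + 4n - 4$. The arithmetic at the end of Lemma~\ref{distboundeasylem} then gives, for $g \geq 1$, that the longest path $(\alpha_{i,j})$ has length at most $(4g + 4n - 4)/n \leq 4g$, while for $g = 0$ with $n \geq 2$ the bound is $3$; the case $n = 2$ with $g = 0$ forces $S$ to be an annulus with a flipped square, which we rule out or handle directly. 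Altogether $d(\pi) \leq \max\{4g, 3\}$.

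The main obstacle is verifying the compatibility claim in the middle step: that the reattachment of vertical bands through a flipped square can always be chosen so that the matching of arcs $\alpha_{i-1,j} \leftrightarrow \alpha_{i,j'}$ extends consistently across the flipped square level. This uses that the four boundary arcs of $E'$, being essential arcs in $F$ by the essentiality hypothesis on the rest of $S$, are isotopic to portions of the multicurves on both sides, so the local rearrangement inside the disk neighborhood of $E'$ does not force any extra step in $\mC(F)$ beyond those already counted in the Euler characteristic bookkeeping.
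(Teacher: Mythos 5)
There is a genuine gap at the heart of your argument: the claim that across a flipped-square level the arcs above and below ``merely relabel the matching'' and still admit disjoint representatives in $F \setminus m$. By definition, a flipped square is precisely a level where the vertical bands/annuli just above $F_{c_i}$ and those just below intersect, in the four corner points of the disk $E'$. When you project $S \cap F_{b_{i-1}}$ and $S \cap F_{b_i}$ into $F$, the corresponding arcs/loops genuinely cross near $E'$, and essentiality of the four boundary arcs of $E'$ does not let you isotope them apart; in general consecutive slices at such a level do \emph{not} span an edge in $\mC(F)$, so your sequence $(\alpha_{i,j})$ is not a path and the step count collapses. Your Euler characteristic bookkeeping (counting $E'$ as a piece contributing $-1$) does not repair this, because the issue is not the number of vertical bands but whether each consecutive pair of slices is distance one.

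The paper's proof confronts exactly this point. It resolves the flipped square in the two possible ways, producing flat surfaces $S_-$ and $S_+$: if either resolution is essential, Lemma~\ref{distboundeasylem} applies directly; if both are inessential, an analysis of when pinching essential arcs/loops creates trivial ones shows the curves above and below the flipped square are at distance exactly two in $\mC(F)$, and then the extra Euler characteristic carried by $S \cap (F \times [-\epsilon,\epsilon])$ (two horizontal subsurfaces with at least two vertical bands/annuli between them) pays for that extra step. Your proposal has no analogue of either branch of this dichotomy. You also do not address the case of several flipped squares in a single level, where the paper bounds the distance by one plus the logarithm of the intersection number and again offsets it with the Euler characteristic recovered by resolving the squares; and the $g=0$, $n=2$ case is left as ``rule out or handle directly,'' which is not an argument.
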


\begin{proof}
First consider the case when there is a single level containing exactly one flipped square in $S$.  Parameterize $S^1$ so that the flipped square is in level $F_0$. Let $\epsilon > 0$ be small enough that there are no other horizontal subsurfaces between $F_{1-\epsilon}$, $F_1 = F_0$ and $F_\epsilon$. We can think of a flipped square as coming from a band move that has been stopped half way through. By either completing the band move, or undoing the beginning of the move, we can find two different flat surfaces $S_-$, $S_+$. If either of these is essential then Lemma~\ref{distboundeasylem} implies the distance bound that we want. Thus we will assume that neither of these surfaces is essential.

The horizontal loops of $S_-$ or $S_+$result pinching two parallel sides of the flipped square together, then isotope the resulting loops to remove the two resulting bigons. If both $S_-$ and $S_+$ are inessential then pinching along each pair of parallel arcs/loops must create a trivial arc or loop. Pinching a loop to itself along a non-trivial arc cannot produce a trivial loop. If pinching two essential loops together creates a trivial loop or arc then the two original loops/arcs must be parallel. Thus the loops/arcs involved in the flipped square consist of two pairs of parallel loops/arcs so that each type of loop/arc intersects the other type in exactly one point. Such loops are distance exactly two in the curve complex.

Similarly, if two arcs are pinched together in both directions, then each arc on one side intersects each arc on the other in a single point. If on either side an arc is pinched to itself then this creates a trivial arc and an essential loop. Since the essential loop is disjoint from the above and below arcs, the above and below arcs again have distance exactly two.

We conclude that every loop or arc in $S \cap F_\epsilon$ is disjoint from every loop or arc in $S \cap F_{-\epsilon}$, except for up to two pairs of loops/arcs that are distance two. On the other hand, note that the Euler characteristic of the surface $S_-$ that results from resoling the flipped square is the same whether or not the arcs/loops in the intermediate subsurface are essential. The surface $S_- \cap (F \times [-\epsilon, \epsilon])$ contains two horizontal subsurfaces with at least two vertical bands/annuli between them. Thus the subsurface $S \cap (F \times [-\epsilon, \epsilon])$ contributes enough to the Euler characteristic of $S$ to account for the arcs that are distance two.

If $S$ contains a number of levels, each containing a single flipped square then we can cut $S^1$ into a number of intervals at these horizontal subsurfaces. For each flipped square, there is extra Euler characteristic to account for the distance two arcs/loops.

If there is a horizontal subsurface with more than one flipped square, then we can again assume that resolving any one of the flipped squares produces an inessential flat surface. By a similar argument to that above, we find that each loop/arc intersects each flipped square at most once or is disjoint from a loop that misses one flipped square and intersects each remaining flipped square at most once. The total number of intersection points between loops/arcs above and below the horizontal subsurface is the number of flipped squares. The distance between two loops/arcs is at most one plus the log (base two) of the intersection number. On the other hand, by resolving the flipped squares, we find extra Euler characteristic to offset the extra distance, so we again find that the displacement distance is at most $\max\{4g,3\}$.
\end{proof}

\section{Mostly horizontal surfaces}
\label{mosltlyhsect}

In the remaining cases, there exist surfaces that do not imply a distance bound.  For example, let $S$ be a surface that consists of a collection of annuli within a regular neighborhood of $\partial N$ and coincides with two pages $F_t$, $F_s$ outside this regular neighborhood. In the case when $F$ has more than one boundary component, choose the vertical annuli to be on both sides of $F_t$. In the case when there is one boundary component, isotope the vertical annulus (in $M$) to a one-bridge annulus. We leave it as an exercise to the reader to show that for $d(\pi)$ sufficiently high, the resulting surface will have topological index one. In other words, this surface is strongly irreducible, but because such a surface exists no matter what the monodromy is, it cannot bound the distance.

In general, we will say that a flat surface $S \subset N$ is \textit{mostly horizontal} if it can be compressed and isotoped in $N$ to a flat (though not necessarily essential) surface $S'$ such that for some regular neighborhood $C \subset N$ of $\partial N$, the complement $S' \setminus C$ is a union of pages $(F_t \cup F_s) \setminus C$. Equivalently, all the vertical bands and vertical annuli of $S'$ are trivial or parallel into $\partial N$. 

\begin{Lem}
\label{distboundclosedesslem}
If $S$ is a genus $g$ essential flat surface, possibly with one or more flipped squares, $S$ is (topologically) index-zero, -one or -two and $S \cap L = \emptyset$ then either $S$ is mostly horizontal or  $d(\pi) \leq 2g-2$.
\end{Lem}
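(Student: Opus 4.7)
The plan is to adapt the loop-tracking argument of Lemma~\ref{distboundeasylem} to the closed case. Since $S \cap L = \emptyset$, the flat surface $S$ lies in the interior of $N$ and has only horizontal subsurfaces and vertical annuli (no vertical bands). First I would parameterize $S^1 = [0,1]/(0 \sim 1)$ so that the page $F_0$ contains no horizontal subsurface of $S$, label the horizontal levels $c_1 < \ldots < c_k$, and pick intermediate levels $b_0 < c_1 < b_1 < \ldots < c_k < b_k$ (with $b_k$ identified with $b_0$ in $S^1$). At each intermediate level $S \cap F_{b_i}$ projects to a disjoint collection of essential loops in $F$, and the collections at $b_{i-1}$ and $b_i$ both lie in the boundary of the horizontal subsurface at $c_i$, hence are pairwise disjoint in $F$. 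Choosing one loop $\gamma_i$ at each $b_i$ yields a path $\gamma_0, \ldots, \gamma_k$ in $\mC(F)$ with consecutive pairs at distance at most $1$, and the monodromy identifies $\gamma_k$ with $\phi(\gamma_0')$ for a loop $\gamma_0'$ at $b_0$ disjoint from $\gamma_0$ in $F$, bounding $d(\pi)$ by the positive-distance transitions traversed.

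The key refinement is that positive-distance transitions in $\mC(F)$ occur only at non-annular horizontal subsurfaces. The tightness condition forbidding $(1,1)$ horizontal annuli forces every annular horizontal piece to have both adjacent vertical annuli on the same side, so its two boundary loops are parallel in $F$, and the loop-tracking can be extended across such a level without changing the vertex in $\mC(F)$. Since each essential non-annular horizontal subsurface has Euler characteristic at most $-1$ and the vertical annuli contribute $0$ to $\chi(S) = 2-2g$, the number of non-annular pieces is at most $2g-2$. With careful bookkeeping (choosing the initial loop so that the cyclic path closes under the monodromy, absorbing the possible $+1$ from the initial/final seam into one of the non-annular transitions) this produces $d(\pi) \leq 2g-2$. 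Flipped squares are accommodated as in Lemma~\ref{distboundfliplem}: each introduces at most one additional transition but forces a matching drop in $\chi(S)$, preserving the bound.

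The mostly horizontal alternative arises precisely when the loop-tracking argument degenerates. If $S$ fails to intersect some page $F_t$, then $S$ lies in $F \times I$; since $\pi_1(F \times I)$ is free, any closed surface of positive genus there is compressible, and repeated compression plus isotopy reduces $S$ to a union of two pages joined by annuli near $\partial N$. More generally, if every vertical annulus can be isotoped (after compressions in $N$) into a collar of $\partial N$, then $S$ is mostly horizontal by definition. The main obstacle is making this dichotomy precise: cleanly characterizing which combinations of annular horizontal pieces and boundary-parallel vertical annuli force $S$ into the mostly horizontal normal form via compressions and isotopies in $N$, versus genuinely contributing to bounding the monodromy displacement. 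Verifying that every failure mode of the loop-tracking argument matches exactly with an isotopy or compression realizing the mostly horizontal structure is the technical heart of the proof.
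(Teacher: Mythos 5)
Your first two paragraphs reproduce the easy half of the argument (the path of essential, non-boundary-parallel loops through the levels $b_i$, bounded by the negative Euler characteristic $2g-2$, with flipped squares absorbed as in Lemma~\ref{distboundfliplem}), and that part matches the paper. But the proof has a genuine gap exactly where you flag it: you never actually establish the dichotomy in the degenerate case, and you never use the hypothesis that $S$ has topological index zero, one or two, which is what the paper's proof of this case turns on. The degenerate case is not ``$S$ misses some page'' but ``some page meets $S$ only in boundary-parallel loops (possibly not at all),'' so that page lies, outside a collar of $\partial N$, entirely on one side of $S$ (``mostly above'' or ``mostly below''). Your assertion that compressibility of $S$ in $F\times I$ lets you ``reduce $S$ to a union of two pages joined by annuli'' is precisely the claim that needs proof: a compressible surface in $F\times I$ could a priori compress to something trivial rather than to pages, and nothing in your argument rules that out or converts that outcome into the distance bound.

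The paper closes this gap in two steps you are missing. First, if one page is mostly above $S$ and another is mostly below, then $S$ separates the two, and maximally compressing $S$ in each complementary product region yields an incompressible surface separating the ends of $F\times I$, hence isotopic to a page; this is what shows $S$ is mostly horizontal. Second, if no page is mostly below, then $S$ is compressible, so it is not index zero; and since the disk complex of a surface compressible to only one side is contractible~\cite{mccullough}, the index-one-or-two hypothesis forces $S$ to be compressible on \emph{both} sides. One then compresses along a maximal collection of disks on the positive side and tracks, band move by band move, whether the last mostly-above page disappears before a mostly-below page appears (giving an essential surface with essential loops at every level, hence the $2g-2$ bound), after it (giving a mostly horizontal surface, hence $S$ mostly horizontal after compression), or simultaneously (in which case stopping the band move halfway produces an extra flipped square, which does not affect the bound). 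Without the index hypothesis and this compression-sequence bookkeeping, your ``failure modes of the loop-tracking argument'' are not matched to either conclusion, so the lemma is not yet proved.
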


\begin{proof}
As in Lemma~\ref{distboundeasylem}, we identify $S^1$ with the interval $[0,1]$ with its endpoints glued together, so that the level $0$ does not contain a horizontal subsurface of $S$. Because $S$ is disjoint from $L$, there are no vertical bands in $S$, so we will form a path of loops in the curve complex for $F$.

If every level $F_t \cap S$ contains a loop that is not boundary parallel in $F_t$ then we can form a path as in the proof of Lemma~\ref{distboundeasylem}, and the length of the path will be bounded by the number of horizontal subsurfaces in $S$. No horizontal subsurface is an annulus, so every horizontal subsurface has Euler characteristic at most $-1$, and the length of the path is at most $2g - 2$, the negative Euler characteristic of the genus $g$ surface $S$.

Otherwise, without loss of generality assume $F_0$ is a level such that every loop $F_t \cap S$ is boundary parallel. (This includes the possibility that the intersection is empty.) Outside a regular neighborhood $C \subset N$ of $\partial N$, the page $F_0 \setminus C$ is entirely on either the positive side or the negative side of $S$. If $F_0 \setminus C$ is on the positive side of $S$ then we will say that $F_0$ is \textit{mostly above} $S$. If it is on the negative side of $S$, we will say it is \textit{mostly below} $S$.

Without loss of generality, assume $F_0$ is mostly above $S$. If there is a second level $F_t$ that is mostly below $S$ then the intersection of $S$ with $F \times [0,t] \setminus C$ separates the top and bottom of the surface product. The only incompressible surfaces in $F \times [0,1]$ are copies of pages or have boundary that intersects $F \times \{0,1\}$ in essential loops and arcs. If we maximally compress $S$ within $F \times [0,t] \setminus C$, the resulting surface will be incompressible and will separate $F \times \{0\}$ from $F \times \{0\}$, so it must be isotopic to $F_s$ for some $s \in (0,t)$. The same argument applies to the intersection of $S$ with $F \times [t, 1] \setminus C$. Thus if one page of the open book is mostly above $S$ and another is mostly below then $S$ can be compressed down to two copes of $F$, i.e.\ $S$ is a mostly horizontal surface.

If there is no page $F_t$ mostly below $S$ then $S$ must be compressible, since it is contained in $F \times [0,1]$, its boundary is isotopic into $\partial F \times [0,1]$ but the surface does not consist of a union of pages. In other words, $S$ does not have index zero, so it has index one or two  by assumption. The set of disks on one side of a compressible form a contractible disk complex~\cite[Theorem 5.3]{mccullough} so any surface with compressing disks on only one side does not have a well defined index. Since $S$ has index zero, one or two by assumption, $S$ must be compressible to both sides.

Let $\mathcal{D}$ be a maximal set of compressing disks on the positive side of $S$. Let $S_0,\dots,S_k$ be a sequence of surfaces resulting from band moves defined by the disks $\mathcal{D}$, after compressing along any trivial vertical annulus that results from a band move. (Thus each $S_i$ will be an essential flat surface.) The final surface $S_k$ will be either incompressible or empty. After each band move, one of two things may happen: There may no longer be any pages mostly above $S$ or there may be a new page that is mostly below $S$.

If, after the band move producing $S_i$, the last page mostly above $S_{i-1}$ is removed before any pages mostly below $S_i$ appear, then $S_i$ is an essential flat surface with non-boundary-parallel arcs/loops at every level and we get the distance bound of $2g-2$. If, on the other hand, a page becomes mostly below $S_i$ before the last page mostly above $S_{i-1}$ is eliminated then $S_i$ will be mostly horizontal, by the second argument above. Since $S_i$ is either isotopic to $S$ or the result of compressing $S$, this implies $S$ is also mostly horizontal.

If, however, the last page mostly above $S_{i-1}$ is eliminated in the same step that some page becomes mostly below $S_i$, we can stop the band move half way through, to find a flat surface $S'$ with an additional flipped square, such that no page is mostly above or below $S'$. Having one or more flipped squares does not affect the distance bound so in this case, following the proof of Lemma~\ref{distboundfliplem}, we still get the distance bound $2g-2$.
\end{proof}

We can now combine the different arguments described so far to prove the most general version of this Lemma:

\begin{Lem}
\label{distboundfinallem}
If $S$ is a genus $g$ essential surface, possibly with one or more flipped squares and up to two one-bridge annuli and $S$ is (topologically) index-zero, -one or -two then either $S$ is mostly horizontal or $d(\pi) \leq 4g+8$. For zero or one flipped squares or one-bridge annuli, the inequality becomes $d(\pi) \leq 4g+4$.
\end{Lem}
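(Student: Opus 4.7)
The plan is to reduce the statement to the three earlier distance lemmas and absorb the one-bridge annulus contributions with a local argument at each such annulus. First I would dispose of the easy cases. If $S \cap L = \emptyset$, then $S$ contains no one-bridge annuli by definition, and Lemma~\ref{distboundclosedesslem} immediately yields either that $S$ is mostly horizontal or $d(\pi) \leq 2g - 2 \leq 4g+4$. If $S \cap L \neq \emptyset$ but $S$ has no one-bridge annuli, then Lemma~\ref{distboundeasylem} (no flipped squares) or Lemma~\ref{distboundfliplem} (with flipped squares) gives $d(\pi) \leq \max\{4g, 3\} \leq 4g+4$. So the substantive case is $S \cap L \neq \emptyset$ with one or two one-bridge annuli present.

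For the main case, I would build the sequence of arcs $(\alpha_{i,j})$ at non-horizontal levels $F_{b_i}$ exactly as in the proof of Lemma~\ref{distboundeasylem}, so that $d(\pi)$ is bounded by the maximum length of a path through these arcs in $\mC(F)$. At a level $F_c$ containing a one-bridge annulus $A$, the two horizontal bridge-disk arcs in $A$ are trivial in the page, so continuing the path requires detouring to essential arcs/loops supported in the boundary of $A$. Since $A$ is a twice-punctured annulus, its boundary together with its core loop provides essential representatives disjoint from the bridge-disk arcs, and each such representative sits at distance at most two in $\mC(F)$ from the trivial arc it replaces. I expect the local detour to cost at most $4$ edges in $\mC(F)$ per one-bridge annulus. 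Meanwhile, the punctured annulus contributes only $-2$ to $\chi(S \setminus L)$, which partially but not fully offsets this extra distance in the Euler characteristic bookkeeping of Lemma~\ref{distboundeasylem}.

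Assembling these contributions: the base bound $\max\{4g,3\}$ rises by at most $4$ for each one-bridge annulus encountered along the path, so zero or one anomaly yields $4g + 4$ and two anomalies yield $4g + 8$. The cap of two comes from Lemma~\ref{indextwolem}. A flipped square is already absorbed by Lemma~\ref{distboundfliplem}'s compensation argument, so when a flipped square and a one-bridge annulus coexist only the latter contributes the extra $+4$; when two one-bridge annuli coexist we pay $+4$ twice. The main obstacle will be the precise local accounting at a one-bridge annulus: verifying that the two bridge-arc detours each cost at most two edges of $\mC(F)$, and that this cost is correctly matched against the $-2$ Euler characteristic of the punctured annulus (plus the vertical bands adjacent to it) so the net increase is exactly bounded by $4$ per annulus, without any double-counting when combined with the flipped-square compensation of Lemma~\ref{distboundfliplem}.
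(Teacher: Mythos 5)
Your treatment of the easy cases is fine, but the main case has a genuine gap: you invoke the mostly-horizontal alternative only when $S \cap L = \emptyset$, and once $S \cap L \neq \emptyset$ you claim an unconditional distance bound. That cannot be right. If the one-bridge annuli account for all of the points of $S \cap L$ (in the notation of the paper's proof, $n = k$; for instance two pages joined by a single one-bridge annulus near the binding, exactly the surface described at the start of Section~\ref{mosltlyhsect}), then there need not be any essential horizontal arcs at all, and there may be pages whose intersection with $S$ contains no essential arc or loop, so no path in $\mC(F)$ can be started, and no ``detour'' is available. Such surfaces exist for monodromies of arbitrarily high displacement, so any argument that outputs $d(\pi) \leq 4g+8$ in this situation proves a false statement. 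The paper's proof handles precisely this case by splitting on $n > k$ versus $n = k$, and in the latter case falling back on the argument of Lemma~\ref{distboundclosedesslem}: one either finds essential loops at every level (and a bound) or concludes that $S$ is mostly horizontal. This is the reason the mostly-horizontal alternative appears in the statement even though $S$ may meet $L$; your proposal omits it where it is actually needed.

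Your accounting in the remaining case ($n > k$) is also not the right mechanism. The paper does not charge an additive penalty of $4$ per one-bridge annulus; it observes that at each non-horizontal level at least $n - k$ of the $n$ arcs are essential, so the Euler characteristic count of Lemma~\ref{distboundeasylem} is distributed over $n-k$ chains instead of $n$, giving $d(\pi) \leq \frac{4g+4n-4}{n-k}$, which for $k \leq 2$ is at most $4g+8$ (worst case $k=2$, $n=3$) and for $k \leq 1$ is at most $4g+4$. Your additive scheme rests on the unverified local step you flag yourself: the ``boundary and core of $A$'' are curves in $S$, not visibly essential curves in a page, and a chain of arcs broken at a trivial bridge arc cannot simply be reconnected at a bounded cost; nothing in the flat-surface structure supplies the claimed distance-two essential representatives. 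Moreover, even granting it, $\max\{4g,3\}+4$ equals $7$ when $g=0$ with one annulus, which fails to recover the stated bound $4g+4=4$ in that subcase, so the bookkeeping would have to be redone along the lines of the division-by-$(n-k)$ argument in any event.
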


\begin{proof}
Because this proof combines arguments from the previous Lemmas, we will give only a rough description of the proof. Let $2n$ be the number of points in $S \cap L$ and let $k$ be the number of one-bridge annuli. If $n > k$ then there are at least $n - k$ essential horizontal arcs between consecutive horizontal subsurfaces, so we can apply the argument from Lemma~\ref{distboundeasylem}. The Euler characteristic of $S$ is $2 - 2g - 2n$ and there are $n - k$ paths of arcs, so the length of the longest path $(\alpha_{i,j})$ is at most $-\frac{4 - 4g - 4n}{n-k}$. By assumption, $k$ is at most $2$. When $g \geq 1$, the total is at most  $4g + 8$, which happens when $k = 2$ and $n = 3$. For $g = 0$, the total is at most $8$. If we do the same calculations with $n \leq k+1$, we find that the bound is $4g+4$ for $g \geq 1$ and $4$ for $g = 0$.

Otherwise, we must have $n = k$. In this case, there may be a level $F_t$ such that $S \cap F_t$ does not contain an essential loop or arc. We apply the argument in the proof of Lemma~\ref{distboundclosedesslem} to find either an essential subsurface with essential loops/arc at every level or an essential surface $S'$ in which there is one page mostly above $S'$ and a second page mostly below $S'$. In the first case, we get the distance bound of $2g + 6$. In the second case, we find that $S$ is mostly horizontal. Thus for any values of $n$, $k$, the distance is at most $4g+8$.
\end{proof}

\section{The main theorems}
\label{proofsect}

\begin{proof}[Proof of Theorem~\ref{mainthm2}]
Let $d$ be the displacement of the monodromy of an open book decomposition for $M$. Let $(\Sigma, H^-_\Sigma, H^+_\Sigma)$ be a genus $g$ Heegaard splitting for $M$ such that $d > 4g+4$. Let $E$ be a thin path in $\mS(M,L)$ representing $(\Sigma, H^-_\Sigma, H^+_\Sigma)$. 

The path $E$ has an interior maximum $v$ that represents a surface $S$ isotopic to $\Sigma$. This maximum has index one in $\mS(M, L)$ so by Lemma~\ref{indexonelem}, $v$ is represented by an essential surface with at most one flipped disk or a one-bridge annulus. By Lemma~\ref{distboundfinallem}, this implies that either $d \leq 4g + 4$ or $S$ can be compressed to a surface isotopic to the union of two pages of the surface bundle. By assumption, $d > 4g+4$ so the latter condition must be satisfied and $\Sigma$ can be compressed down to the Heegaard surface induced by $(L, \pi)$. By Proposition~22 in~\cite{me:stabs}, this implies that $\Sigma$ is in fact a stabilization of the Heegaard surface induced by the open book decomposition.

We next need to show that every automorphism of $\Sigma$ is induced by this open book. Every element of $Isot(M,\Sigma)$ is defined by an ambient isotopy of $M$ (not necessarily fixing $L$) that takes $\Sigma$ off iteself, then back to itself. This isotopy defines a sequence of paths in $\mS(M, \Sigma)$. By Lemma~\ref{indextwopathlem}, this implies that there is a path in the complex of surfaces with index-two maxima and index-one and -zero minima, all with genus $g$ and related by bridge compressions. Each can be isotoped to an essential flat surface. Because of the distance bound, all of these surfaces must be mostly horizontal.

Note that the Euler characteristic of $\Sigma$ is exactly twice that of $F$, so outside a regular neighborhood $C$ of $L$, each surface consists of two pages of the open book. Within each solid torus component of $C$, there is either a pair of bridge disks defining a one-bridge annulus for $\Sigma$ or an essential annulus with one boundary loop in $L$ and the other in $\Sigma$. Since $\Sigma \setminus C$ is essential, these bridge disks/annuli are unique and the isotopy can be extended to them.

Moreover, the isotopy can be further extended to a regular neighborhood of the disks and annuli, which is isotopic to $C$. Thus the isotopy of $\Sigma$ is defined by an isotopy of two pages of the surface bundle $N \setminus C$. The only such isotopy is the one that spins the pages around the surface bundle structure so the isotopy of $\Sigma$ is the book rotation defined by the open book decomposition.
\end{proof}

\begin{proof}[Proof of Theorem~\ref{mainthm}]
This theorem follows immediately from Theorem~\ref{mainthm2}: Every open book decomposition for $M$ induces a Heegaard splitting for $M$ whose genus is $g = 1 - \chi$, where $\chi$ is the Euler characteristic of a page. Since every Heegaard splitting not induced by the open book has genus $h$ satisfying $4h+4 \geq d > 4g+8$, the Euler characteristic of every open book for $M$ is at least that of $(L, \pi)$. If an open book decomposition induces the same Heegaard splitting as $(L, \pi)$ then by Theorem~\ref{mainthm2}, it induces an automorphism of the Heegaard splitting whose fixed set is the binding of this open book. However, Theorem~\ref{mainthm2} states that every automorphism of the Heegaard splitting is induced by $(L, \pi)$ and thus has fixed set $L$ with pages contained in the Heegaard surface. Because the bindings are the same and they have two pages in common, the two open book decompositions for $M$ are isotopic to each other.
\end{proof}

\bibliographystyle{amsplain}
\bibliography{books}

\end{document}